\newcommand{\MZ}{\mathbb{Z}}
\newcommand{\MQ}{\mathbb{Q}}
\newcommand{\MN}{\mathbb{N}}
\newcommand{\MR}{\mathbb{R}}
\newcommand{\MC}{\mathbb{C}}
\newcommand{\MH}{\mathbb{H}}
\renewcommand{\H}{\mathrm{H}}
\renewcommand{\O}{\mathcal{O}}
\renewcommand{\c}{\mathfrak{c}}
\newcommand{\End}{\mathrm{End}}
\newcommand{\trace}{\mathrm{Tr}}
\renewcommand{\P}{\mathcal{P}}
\newcommand{\GL}{\mathrm{GL}}
\renewcommand{\S}{\mathcal{S}}
\newcommand{\stab}{\mathrm{Stab}}
\renewcommand{\a}{\mathfrak{a}}
\newcommand{\Cl}{\mathrm{Cl}}
\newcommand{\WR}{\Sigma^{\mathrm{wr}}}
\newcommand{\WRF}{\WR_{=1}}
\newcommand{\Aff}{\mathrm{Aff}}
\newcommand{\SL}{\mathrm{SL}}
\newcommand{\PSL}{\mathrm{PSL}}
\newcommand{\fp}{\mathfrak{p}}
\begin{document}

\title{Resolutions for unit groups of orders}

\author{Sebastian Sch\"onnenbeck
}

\institute{ Sebastian Sch\"onnenbeck \at
              Lehrstuhl D f\"ur Mathematik, RWTH Aachen \\
	      Pontdriesch 14/16, D-52062 Aachen, Germany\\
              \email{sebastian.schoennenbeck@rwth-aachen.de}\\
	    \url{www.math.rwth-aachen.de/homes/Sebastian.Schoennenbeck}%
}
\date{}

\maketitle

\begin{abstract}
We present a general algorithm for constructing a free resolution for unit groups of orders in semisimple rational algebras. The approach is based on computing a contractible $G$-complex employing the theory of minimal classes of quadratic forms and Opgenorth's theory of dual cones. The information from the complex is then used together with Wall's perturbation lemma to obtain the resolution.
\keywords{Homology of arithmetic groups \and Maximal Orders \and Voronoi theory \and Computational Homological Algebra}

\end{abstract}

\section{Introduction}\label{intro}
Let $K$ be an imaginary quadratic number field with ring of integers $\MZ_K$. The integral homology of $\GL_2(\MZ_K)$ (or often rather the closely related group $\PSL_2(\MZ_K)$ known as a Bianchi group) is a widely studied concept in the literature (see for example \cite{BerkoveCohomology, rahm, SchwermerVogtmann, Cremona}). In higher dimensions considerably less is known but there are still some results available (see \cite{PSL4, GunnelsCohomologyOfLinearGroups}). 

The group $\GL_2(\MZ_K)$ can be thought of as the unit group of the maximal $\MZ_K$-order $\End_{\MZ_K}(\MZ_K^2)$ in the simple $\MQ$-algebra $K^{2\times 2}$. Thus questions about the homology of $\GL_2(\MZ_K)$ naturally generalize to questions about the homology of unit groups of general maximal orders in simple $\MQ$-algebras. In this article we present an algorithm to construct a free resolution in this very general setup. The approach generalizes the ideas used in \cite{PSL4} to compute the integral homology of $\PSL_4(\MZ)$ and \cite{GunnelsCohomologyOfLinearGroups} to compute the integral homology of $\GL_3$ over imaginary quadratic integers. Moreover the ideas presented here were recently used to compute maximal subgroups and presentations of these unit groups (see \cite{MaxMin,Braunetal}). We start out by constructing a cell complex with suitable action in the Voronoi cone of positive definite forms (following \cite{Opgenorth} which itself is a generalization of \cite{KoecherReduktionstheorie}). The resulting cell complex does not directly yield a free resolution (seeing as there are usually nontrivial cell stabilizers), however, it can be used in a subsequent step together with Wall's perturbation lemma (\cite{Wall}) to actually construct one.

The algorithm has been implemented (using a combination of \cite{Magma, GAP4,HAP}) in a few sample cases. In particular we were able to construct cell complexes for certain groups of type $\SL_2$ over imaginary quadratic integers having only finite cell stabilizers (where previously only complexes with infinite stabilizers were known) and we supplement some of the results of \cite{GunnelsCohomologyOfLinearGroups} by computing the torsion in the integral homology of $\GL_3(\MZ_K)$ for $K=\MQ(\sqrt{-7})$ and $K=\MQ(\sqrt{-1})$ in low dimensions. 
 
The article is organized as follows. We start by reviewing some basic notions on maximal orders in simple $\MQ$-algebras and the associated cone of positive definite forms. In section \ref{Complexsection} we then construct the cell complex of minimal classes and study its geometry. Section \ref{Perturbationsection} is devoted to the construction of a free resolution starting from the non-free resolution we obtain from the cell complex and finally in Section \ref{Computationalsection} we present some examples of the results we were able to achieve using this method.
\section{Prerequisites}\label{Presection}
In this section we want to review some facts on maximal orders in simple $\MQ$-algebras and the cone of positive definite forms. A standard reference for the former is \cite{Reiner} and for the latter we refer to \cite{MaxMin} and \cite{Braunetal}.
\subsection{Maximal orders}
Let us first fix some notation. For the remainder of this article let $A$ denote a finite-dimensional simple 
$\MQ$-algebra. Then we have $A \cong D^{n \times n}$ for some finite-dimensional $\MQ$-division 
algebra $D$ and $n \in \MN$. Furthermore we will write $K=\mathrm{Z}(D)$ for the center of $K$, 
$\MZ_K:=\mathrm{Int}_\MZ(K)$ for its ring of integers, and denote a maximal $\MZ_K$-order in D by $\O$.

\begin{definition}
 An $\O$-lattice of rank $n$ is a finitely generated $\O$-submodule of the right $D$-module $V:=D^n$ containing 
a $D$-basis of $V$.
\end{definition}

\begin{remark}
 \begin{enumerate}
  \item Let $L$ be some $\O$-lattice of rank $n$. Steinitz's theorem \cite[Thm 4.13, Cor. 
35.11]{Reiner} implies that there exist right $\O$-ideals $\c_1,...,\c_n$ as well as a $D$-basis $e_1,...,e_n$ 
of $V$ such that $L=e_1\c_1 \oplus ... \oplus e_n\c_n$. The Steinitz invariant of $L$ is defined to be the 
class 
$\mathrm{St}(L):=\left[\c_1\right] +...+\left[c_n \right]$ in the group of stable isomorphism classes of right 
$\O$-ideals.
\item If $n\geq 2$ two lattices $L_1,L_2$ of equal rank are isomorphic if and only if 
$\mathrm{St}(L_1)=\mathrm{St}(L_2)$. In particular if $\mathrm{St}(L_1)=\left[\c \right]$ 
we have $L_1 \cong \O^{n-1} \oplus \c$.
\item The endomorphism ring $\End_\O(L)=\{X \in A~|~XL=L\}$ is a maximal order in $\End_D(V)\cong A$ and any 
maximal order in $A$ is of this form \cite[Cor. 27.6]{Reiner}.
 \end{enumerate}
\end{remark}

\subsection{The cone of positive definite forms}
Taking the scalar extension of our simple $\MQ$-algebra $A$ with the reals yields a semisimple 
$\MR$-algebra 
$A_\MR:=A \otimes_\MQ \MR$ which is therefore isomorphic to a direct sum of matrix rings over $\MR$, $\MC$ and 
$\MH$ (the quaternions), respectively. We resort to the notation of \cite{MaxMin} and set $d^2:=\dim_K(D)$. 
Moreover let $s$ be the number of real places of $K$ which ramify in $D$, $r$ the number of real places of 
$K$ which do not ramify in $D$, and $t$ the number of complex places of $K$. Having fixed this notation we 
know that
\begin{equation}
 D_\MR:=D\otimes_\MQ \MR \cong \bigoplus_{i=1}^s \MH^{d/2 \times d/2} \oplus \bigoplus_{i=1}^r \MR^{d \times d} 
\oplus \bigoplus_{i=1}^t \MC^{d \times d}.
\end{equation}
On this algebra we define the involution $^*$ (which is canonical up to the choice of the above isomorphism) 
componentwise to be transposition on the matrix rings over $\MR$ and transposition and entrywise conjugation 
(complex or quaternionic, respectively) on the matrix rings over $\MC$ or $\MH$. This definition yields in the 
usual way a map 
\begin{equation}
 ^\dagger: D_\MR ^{m \times n} \rightarrow D_\MR^{n \times m}
\end{equation}
via transposition and applying $^*$ componentwise, in particular we get an involution on $A_\MR \cong D_\MR^{n 
\times 
n}$.

\begin{definition}
 \begin{enumerate}
  \item Let $\Sigma:=\{F \in A_\MR ~|~F^\dagger=F \}\subset A_\MR$ denote the $\MR$-subspace of 
$\dagger$-Hermitian elements of $A_\MR$.
\item On $\Sigma$ we may define a positive definite bilinear form via
\begin{equation}
 \langle F_1,F_2 \rangle:=\trace(F_1F_2)
\end{equation}
where $\trace$ indicates the reduced trace of the semisimple $\MR$-algebra $A_\MR$.
\item $\P$ will denote the cone of positive definite elements in $\Sigma$:
\begin{equation}
 \P:=\{(q_1,...,q_s,f_1,..,f_r,h_1,...,h_t) \in \Sigma~|~ q_i,f_j,h_k \text{ positive definite} \}.
\end{equation}
 \end{enumerate}
\end{definition}
The elements of $V_\MR:=V \otimes_\MQ \MR$ correspond to elements of $\Sigma$ in the sense that for
each $x \in V_\MR:=V \otimes_\MQ \MR$ we have $xx^\dagger \in \Sigma$.
\begin{lemma}{\cite[Lemma 3.2]{MaxMin}}
 Let $F \in \Sigma$, then $F$ defines a quadratic form on $V_\MR$ via
 \begin{equation}
  F\left[x\right]:=\langle F,xx^\dagger \rangle, ~x \in V_\MR.
 \end{equation}
This form is positive definite if and only if $F \in \P$.
\end{lemma}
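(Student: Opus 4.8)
The plan is to break $A_\MR$ into its simple factors and then reduce, factor by factor, to the familiar fact that a Hermitian matrix over $\MR$, $\MC$ or $\MH$ is positive definite exactly when the associated form on column vectors is. Before anything else I would note that $x \mapsto F[x] = \langle F, xx^\dagger\rangle = \trace(F\,xx^\dagger)$ is a homogeneous degree-two polynomial in the real coordinates of $x$, so it is automatically a quadratic form on $V_\MR$; only the positive-definiteness criterion needs an argument.

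I would first carry out the reduction to simple factors. Write $A_\MR = \bigoplus_\ell A_\ell$ with $A_\ell = \mathbb{K}_\ell^{m_\ell \times m_\ell}$ simple, $\mathbb{K}_\ell \in \{\MR, \MC, \MH\}$ (these are the simple factors of $D_\MR^{n\times n}$ arising from the decomposition of $D_\MR$ recalled above), and let $\epsilon_\ell$ be the central idempotent of $A_\ell$. Since the involution ${}^\dagger$ acts componentwise it fixes each $\epsilon_\ell$, so $\Sigma = \bigoplus_\ell \epsilon_\ell\Sigma$, $V_\MR = \bigoplus_\ell \epsilon_\ell V_\MR$, and for $x = \sum_\ell x_\ell$ with $x_\ell = \epsilon_\ell x$ one has $x_\ell x_{\ell'}^\dagger = \epsilon_\ell(xx^\dagger)\epsilon_{\ell'} = 0$ for $\ell \neq \ell'$. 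Hence $xx^\dagger = \sum_\ell x_\ell x_\ell^\dagger$, and since the reduced trace respects the decomposition, $F[x] = \sum_\ell F_\ell[x_\ell]$ with $F_\ell = \epsilon_\ell F$; thus $F[\cdot]$ is an orthogonal direct sum of the quadratic forms $F_\ell[\cdot]$. A finite orthogonal sum of quadratic forms is positive definite iff every summand is, and by the definition of $\P$ we have $F \in \P$ iff every $F_\ell$ is a positive definite Hermitian matrix, so it suffices to treat the case $A_\MR = \mathbb{K}^{m \times m}$ simple.

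In that case $D_\MR = \mathbb{K}^{k\times k}$ with $m = nk$, and $V_\MR = D_\MR^{n}$; as a left $A_\MR$-module this decomposes as a direct sum of $k$ copies of the column module $\mathbb{K}^m$: the maps $x \mapsto x^{(p)} := (x_1^{(p)}, \dots, x_n^{(p)}) \in (\mathbb{K}^k)^n = \mathbb{K}^m$, sending $x$ to the tuple of $p$-th columns of its blocks $x_1, \dots, x_n \in \mathbb{K}^{k\times k}$, are $A_\MR$-linear and together give an $A_\MR$-isomorphism $V_\MR \cong (\mathbb{K}^m)^{\oplus k}$. I would then compute, using cyclicity of the reduced trace, that $F[x] = \trace(F\,xx^\dagger) = \trace_{D_\MR}(x^\dagger F x)$, where $x^\dagger F x$ is the Hermitian $k\times k$ matrix with $(p,q)$-entry $(x^{(p)})^\dagger F\,x^{(q)}$ (here $F$ is read as a Hermitian $m\times m$ matrix over $\mathbb{K}$). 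Since the reduced trace of a Hermitian matrix over $\mathbb{K}$ is a fixed positive multiple $c_{\mathbb{K}}$ of its ordinary matrix trace, this yields
\[
  F[x] \;=\; c_{\mathbb{K}}\sum_{p=1}^{k} (x^{(p)})^\dagger F\, x^{(p)},
\]
a positive multiple of the $k$-fold orthogonal sum of the (real-valued, since $F = F^\dagger$) quadratic form $v \mapsto v^\dagger F v$ on $\mathbb{K}^m$. Because $k \geq 1$, this is positive definite iff $v^\dagger F v > 0$ for all $0 \neq v \in \mathbb{K}^m$, i.e.\ iff the Hermitian matrix $F$ is positive definite, i.e.\ iff $F \in \P$; undoing the reduction then finishes the proof.

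I expect the only genuine work to be the bookkeeping in the simple case: identifying $V_\MR = D_\MR^{n}$ over the matrix ring $D_\MR = \mathbb{K}^{k\times k}$ with $k$ copies of the column module $\mathbb{K}^m$ (so that $x$ becomes an $m\times k$ matrix and $xx^\dagger = \sum_p x^{(p)}(x^{(p)})^\dagger$ a sum of rank-one Hermitian matrices), and checking case by case that on Hermitian matrices the reduced trace is a positive multiple of the ordinary trace — in the case $\mathbb{K} = \MH$ one also uses that a quaternion fixed by conjugation is real, so that $v^\dagger F v \in \MR$. Once the displayed formula is available, both implications of the equivalence are immediate and everything preceding it is purely formal.
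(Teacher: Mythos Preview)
The paper does not actually supply a proof of this lemma: it is quoted verbatim from \cite[Lemma~3.2]{MaxMin} and used as a black box, so there is no argument in the present paper to compare against.

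Your proof is correct. The reduction to simple factors via the central idempotents is the natural first step, and the only point requiring care is exactly the one you isolate: in a single factor $\mathbb{K}^{m\times m}$ with $m=nk$, identifying $V_\MR=(\mathbb{K}^{k\times k})^n$ with the space of $m\times k$ matrices over $\mathbb{K}$ so that $xx^\dagger=\sum_p x^{(p)}(x^{(p)})^\dagger$, and then using that the reduced trace on $\mathbb{K}^{m\times m}$ factors as $\mathrm{trd}_{\mathbb{K}}$ composed with the ordinary matrix trace (which does satisfy $\mathrm{tr}(AB)=\mathrm{tr}(BA)$ for rectangular $A,B$). This yields your displayed formula, and the equivalence with positive definiteness of the Hermitian matrix $F$ is then immediate. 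The remark that $v^\dagger F v$ is real (automatic for $\mathbb{K}=\MR$, by Hermitian symmetry for $\MC$, and because conjugation-fixed quaternions are real for $\MH$) is needed and you note it. Nothing is missing.
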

With this lemma in mind we shall also refer to the elements of $\Sigma$ as 'forms'.

\begin{lemma}
 \begin{enumerate}
  \item $\P$ is an open subset of $\Sigma$.
  \item Let $F_1,F_2 \in \P$ then $\langle F_1,F_2 \rangle >0$.
  \item For all $F_1 \in \Sigma-\P$ there exists $0 \neq F_2 \in \overline{\P}$ with $\langle F_1,F_2 \rangle 
\leq 0$, where $\overline{\P}$ denotes the topological closure of $\P$. 
 \end{enumerate}
\end{lemma}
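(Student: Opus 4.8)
The plan is to reduce everything to the simple factors of $A_\MR$, which are matrix algebras over $\MR$, $\MC$ or $\MH$, so that the three assertions become classical statements about Hermitian matrices; here $\Sigma$, $\P$, $^\dagger$ and $\langle\cdot,\cdot\rangle$ all decompose accordingly. For (1) I would argue that being positive definite is an open condition. By the preceding lemma, $F\in\P$ if and only if the quadratic form $F[\cdot]$ on $V_\MR$ is positive definite, i.e.\ (by homogeneity) if and only if $\min_{x\in S}F[x]>0$, where $S$ is the unit sphere of $V_\MR$ for a fixed norm. This minimum is attained and finite for every $F\in\Sigma$ since $S$ is compact and $(F,x)\mapsto F[x]=\langle F,xx^\dagger\rangle$ is a polynomial, hence continuous, function. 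For $F_0\in\P$ we have $\mu:=\min_{x\in S}F_0[x]>0$, and $|F[x]-F_0[x]|=|\langle F-F_0,xx^\dagger\rangle|$ stays below $\mu$ uniformly on $S$ once $F$ is close enough to $F_0$; hence a whole neighbourhood of $F_0$ lies in $\P$, and $\P$ is open (block-wise this is just Sylvester's criterion).

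For (2), I would use that every element of $\P$ has an invertible Hermitian square root in $\P$ (block-wise this is the spectral theorem over $\MR$, $\MC$ and $\MH$). Writing $F_i=G_i^2$ with $G_i=G_i^\dagger\in\P$ and using cyclicity of the reduced trace together with $(G_1G_2)^\dagger=G_2^\dagger G_1^\dagger=G_2G_1$,
\begin{equation}
 \langle F_1,F_2\rangle=\trace(G_1^2G_2^2)=\trace\bigl((G_1G_2)(G_2G_1)\bigr)=\trace\bigl(YY^\dagger\bigr),\qquad Y:=G_1G_2,
\end{equation}
which is positive because $Y$ is invertible, hence nonzero, and the Frobenius-type form $Z\mapsto\trace(Z^\dagger Z)$ on $A_\MR$ is positive definite (its restriction to $\Sigma$ being the positive-definiteness of $\langle\cdot,\cdot\rangle$ already recorded, and block-wise this is the fact that a nonzero positive semidefinite Hermitian matrix has positive reduced trace). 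For (3), let $F_1\in\Sigma\setminus\P$; by the preceding lemma the form $F_1[\cdot]$ is not positive definite, so there is $x\in V_\MR\setminus\{0\}$ with $F_1[x]\le0$. Set $F_2:=xx^\dagger\in\Sigma$. Then $F_2\neq0$ (else $\trace(xx^\dagger)=\trace(x^\dagger x)=0$ would force $x=0$), and $F_2$ is positive semidefinite — for $y\in V_\MR$, $F_2[y]=\langle xx^\dagger,yy^\dagger\rangle=\trace\bigl((y^\dagger x)^\dagger(y^\dagger x)\bigr)\ge0$ — so $xx^\dagger+\varepsilon\,\id\in\P$ for all $\varepsilon>0$ and therefore $F_2=\lim_{\varepsilon\to0^+}(xx^\dagger+\varepsilon\,\id)\in\overline{\P}$. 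Finally $\langle F_1,F_2\rangle=F_1[x]\le0$ by the definition of $F_1[\cdot]$, which is what we wanted.

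The only genuinely delicate point is the quaternionic case: existence of Hermitian square roots, and the realness and positivity of the reduced traces involved. I would handle this through the standard faithful $\MR$-algebra embedding $\MH^{k\times k}\hookrightarrow\MC^{2k\times2k}$, under which $\dagger$-Hermitian (positive definite) matrices correspond to complex Hermitian (positive definite) ones and the reduced trace to a positive multiple of the complex trace; everything then follows from the real and complex cases. The bulk of the write-up is therefore the bookkeeping that makes the decomposition of $A_\MR$ into simple factors compatible with $^\dagger$, $\trace$ and $\langle\cdot,\cdot\rangle$ precise; the three arguments above are short once that is in place.
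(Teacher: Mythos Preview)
Your proof is correct, and for parts (1) and (3) it coincides with the paper's argument: the paper simply declares (1) well known, and for (3) it chooses $F_2=xx^\dagger$ for some $x\neq0$ with $F_1[x]\le0$, noting that $xx^\dagger$ is positive semidefinite and hence lies in $\overline{\P}$. For part (2) the paper takes a slightly different route: instead of extracting Hermitian square roots of both $F_1$ and $F_2$, it works componentwise, invokes the spectral theorem to diagonalise $F_1$ alone, and then observes that $\trace(F_1F_2)$ becomes a positive linear combination of the (positive) diagonal entries of $F_2$. Your square-root identity $\trace(F_1F_2)=\trace(YY^\dagger)$ with $Y=G_1G_2$ is an equally standard trick and has the advantage of treating the real, complex and quaternionic blocks uniformly once square roots are available; the paper's diagonalisation is marginally more direct but relies on the same spectral input.
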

\begin{proof} \begin{enumerate}
                 \item This is well known.
                 \item We may prove this componentwise. But then the spectral theorem holds and we may assume 
$F_1$ to be a diagonal matrix with positive real entries. In this case the assertion is obvious.
		\item Let $F \in \Sigma-\P$. In this case the form which $F$ defines on $V_\MR$ is not positive 
and there exists some $0\neq x \in V_\MR$ with $\langle F,xx^\dagger \rangle \leq 0$. Now $xx^\dagger$ is 
positive semidefinite and therefore in the topological closure of $\P$.\qed
                \end{enumerate}
 \end{proof}

The preceding lemma shows that $\P \subset \Sigma$ constitutes a self-dual cone in the sense of 
\cite{Opgenorth}.
\section{The CW-complex of well-rounded forms}\label{Complexsection}
We keep the notation from the previous section and are now prepared to construct (for a given unit group of a maximal order) a cell complex with a suitable 
action. The cell complex is constructed by decomposing the cone $\P$ into so called minimal classes which were first introduced in \cite{ash1984} which is also our primary source.
\subsection{Minimal classes}
We will first introduce the notion of minimal classes which will define the desired cell structure. Most of the 
paragraph will closely follow the ideas of \cite{MaxMin}.
For the remainder of this section let $L$ denote a fixed $\O$-lattice of rank $n$, $\Lambda:=\End_\O(L)$ its 
ring of endomorphisms, and $\GL(L):=\Lambda^*$ the corresponding unit group. 
\begin{lemma}\label{admissiable}
 The set $M_L:=\{ll^\dagger~|~ 0 \neq l \in L \}$ is discrete in $A_\MR$ and admissible in the sense of 
\cite{Opgenorth}, i.e. for each sequence $(F_i)_{i \geq 1} \subset \P$ converging to an element $F \in 
\overline{\P}-\P$ the sequence $(\min_{x \in M_L}F_i\left[x\right])_{i \geq 1}$ converges to $0$. 
\end{lemma}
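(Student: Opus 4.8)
The statement has two parts: discreteness of $M_L$ in $A_\MR$, and admissibility in Opgenorth's sense. I would handle these separately, with discreteness as the (relatively easy) first step and admissibility as the real content.

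\emph{Discreteness.} The plan is to observe that $L$ is a lattice in $V_\MR$ (a free $\MZ$-module of full rank, by the Steinitz decomposition $L=e_1\c_1\oplus\cdots\oplus e_n\c_n$ together with the fact that each right $\O$-ideal $\c_i$ is a $\MZ$-lattice in $D$). Hence $L$ is discrete in $V_\MR$. The map $x\mapsto xx^\dagger$ from $V_\MR$ to $\Sigma\subset A_\MR$ is a polynomial (quadratic) map, in particular continuous and proper onto its image: indeed, by the positive-definite form $\langle\cdot,\cdot\rangle$ on $\Sigma$ we have $\langle xx^\dagger,xx^\dagger\rangle$ controlled from below by (a power of) $\langle F_0,xx^\dagger\rangle=F_0[x]$ for any fixed $F_0\in\P$, and $F_0[\cdot]$ is a positive definite quadratic form on $V_\MR$; so $xx^\dagger\to\infty$ as $x\to\infty$. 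Therefore $\{xx^\dagger : x\in L,\ \|x\|\le R\}$ is finite for every $R$, and since preimages of bounded sets under $x\mapsto xx^\dagger$ are bounded, $M_L=\{ll^\dagger : 0\neq l\in L\}$ has no accumulation point in $A_\MR$ (note $0\notin M_L$ and $ll^\dagger=0\iff l=0$, again because $F_0[l]=\langle F_0,ll^\dagger\rangle>0$ for $l\neq 0$).

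\emph{Admissibility.} Here I want to show: if $F_i\in\P$ with $F_i\to F\in\overline{\P}\setminus\P$, then $\min_{x\in M_L}F_i[x]=\min_{0\neq l\in L}\langle F_i,ll^\dagger\rangle\to 0$. Fix $\epsilon>0$; I must produce $0\neq l\in L$ with $F_i[l]<\epsilon$ for all large $i$. Since $F\in\overline{\P}\setminus\P$, the form $F[\cdot]$ on $V_\MR$ is positive \emph{semi}definite but not definite, so it has a nontrivial kernel: there is $0\neq v\in V_\MR$ with $F[v]=\langle F,vv^\dagger\rangle=0$. Because $L\otimes_\MZ\MQ=V$ is dense in $V_\MR$ and $F[\cdot]$ is continuous, I can choose $l\in L$ with $F[l]<\epsilon/2$ — more carefully, choose a rational point $w$ near the ray through $v$ so that $F[w]$ is small, then clear denominators to land in $L$ (scaling only makes $F[w]$ smaller once it is $<1$, or at worst one controls it by homogeneity of degree $2$; picking $w$ close enough to a suitable multiple of $v$ makes $F[w]$ as small as desired while $w\in L$). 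Then for $i$ large, $F_i[l]=\langle F_i,ll^\dagger\rangle\to\langle F,ll^\dagger\rangle=F[l]<\epsilon/2<\epsilon$, so $\min_{x\in M_L}F_i[x]\le F_i[l]<\epsilon$ eventually. Since the minima are all $\ge 0$, this gives convergence to $0$.

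\emph{Main obstacle.} The delicate point is making the "approximate a null vector of $F$ by a lattice vector with small $F$-value" step rigorous and \emph{uniform}: one needs that the minimum over $M_L$ is actually attained (so that $\min$ makes sense — this follows from discreteness of $M_L$ plus positivity, since for fixed $F_i\in\P$ the set $\{l\in L : F_i[l]\le c\}$ is finite) and that the lattice vector $l$ produced for a given $\epsilon$ works simultaneously for all sufficiently large $i$, which is immediate once $l$ is fixed \emph{before} passing to the limit in $i$. The only genuine subtlety is that $F$ may fail to have a rational (or $L$-rational) null vector, so one cannot just take $l$ in the kernel; instead one exploits density of $L\otimes\MQ$ in $V_\MR$ and continuity/homogeneity of $F[\cdot]$ to find lattice vectors on which $F$ is arbitrarily small, which is exactly the content of the "Hermite"-type argument underlying Opgenorth's framework.
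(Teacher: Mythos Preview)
Your discreteness argument is fine and more detailed than the paper's one-line version. The gap is in the admissibility half.

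Your plan is to pick a nonzero null vector $v$ of $F$, approximate it by a rational point $w$ with $F[w]$ small, and then clear denominators to land in $L$. The parenthetical ``scaling only makes $F[w]$ smaller once it is $<1$'' is false: $F$ is homogeneous of degree~$2$, so clearing a denominator $N$ replaces $F[w]$ by $N^2 F[w]$, which can be arbitrarily large. More structurally, being Euclidean-close to $\MR v$ does not control $F[l]$ at all: if $l=tv+u$ with $u$ bounded, then since $v$ lies in the radical one has $F[l]=F[u]$, which is merely bounded, not small. So density of $L\otimes\MQ$ together with continuity of $F[\cdot]$ is not enough; you need lattice points whose component \emph{transverse} to the radical is small, and nothing in your argument produces those.

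The paper obtains exactly this via Minkowski's lattice point theorem: write $V_\MR=\mathrm{rad}(F)\oplus U$, let $U_\epsilon$ be the $F$-ball of radius $\epsilon$ in $U$, and observe that $\mathrm{rad}(F)\oplus U_\epsilon$ is convex, symmetric, and of infinite volume (since $\mathrm{rad}(F)\neq 0$), so it contains a nonzero $l_\epsilon\in L$ with $F[l_\epsilon]\le\epsilon$. Equivalently, one can note that the projection of the rank-$\dim V_\MR$ lattice $L$ to the strictly lower-dimensional space $V_\MR/\mathrm{rad}(F)$ cannot be discrete, hence has points arbitrarily close to~$0$. Either way, this is the missing idea your ``Hermite-type'' remark gestures at but does not supply.
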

\begin{proof} $L$ is discrete in $V_\MR$, hence $M_L$ is discrete in $\Sigma$. Let now $F \in 
\overline{\P}-\P$ be a positive semidefinite form. It suffices to construct a sequence $(l_i)_i\subset L-\{0\}$ 
such that $(F\left[l_i \right])_i$ converges to $0$. To do this we decompose $V_\MR=\mathrm{rad}(F) \oplus U$ 
into the radical of the form defined by $F$ and an arbitrary complement $U$ (in particular $F_{|U}$ is positive 
definite). Now let $U_\epsilon$ be the open ball of radius $\epsilon$ around $0$ in $U$ relative to the norm defined on 
$U$ by $F$. Then $\mathrm{rad}(F) \oplus U_\epsilon$ is convex, centrally symmetric around the origin and of 
infinite volume. Minkowski's lattice point theorem then implies that there is some $0 \neq l_\epsilon \in L 
\cap (\mathrm{rad}(F) \oplus U_\epsilon)$ and we have $F\left[l_\epsilon \right] \leq \epsilon$. We may 
therefore construct the desired sequence and the assertion follows. \qed \end{proof}

We will now give a short introduction to the notion of weights which was introduced in 
\cite{ash1984} and which allows us to construct several non-equivalent cell decompositions of the same space.

\begin{definition}
 \begin{enumerate}
  \item A weight $\varphi$ on $L$ is a $\GL(L)$-invariant map from the projective space $\mathbf{P}(D^n)$ to 
the 
positive reals with maximum $1$.
\item By $\varphi_0$ we will denote the trivial weight, i.e. $\varphi_0(x):=1$ for all $x$.
 \end{enumerate}
\end{definition}
In what follows we will not strictly distinguish between a weight $\varphi: \mathbf{P}(D^n) \rightarrow 
\MR_{>0}$ and the induced map $L-\{0\} \rightarrow \MR_{>0}, l \mapsto \varphi(lD)$.

\begin{definition}\begin{enumerate}
                   \item Let $L=e_1 \c_1 \oplus...\oplus e_n\c_n$ be a lattice. To $0 \neq l=\sum_{i=1}^n e_il_i 
\in L$ we associate the integral left $\O$-ideal $\a_l:=\sum_{i=1}^n \c_i^{-1}l_i$ and its integral norm 
$\mathrm{N}(\a_l):=|\O/\a_l|=\mathrm{N}_{K/\MQ}(\mathrm{nr}(\a_l)^d)$.
\item Let $x \in D^n$ and $0\neq \lambda \in D$ arbitrary with $x\lambda \in L$. We define
\begin{equation}
 \mathrm{N}_x:=\mathrm{N}(\left[ \mathrm{nr}(\a_{x\lambda}) \right])=\min_{I \triangleleft \O, 
\left[\mathrm{nr} (I) \right]=\left[\mathrm{nr}(\a_{x\lambda})\right]}\mathrm{N}_{K/\MQ}(\mathrm{nr}(I)^d).
\end{equation}
\item For $x \in D^n$ we set $\varphi_1(x):=\mathrm{N}_x^{-2/|K:\MQ|}$.
\end{enumerate}
\end{definition}
\begin{proposition}{\cite[Lemma 4.3]{MaxMin}}
 The function \begin{equation}\varphi_1:\mathbf{P}(D^n)\rightarrow \MR,~\left[x\right] \mapsto \varphi_1(x)\end{equation}is a well-defined 
weight.
\end{proposition}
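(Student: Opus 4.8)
The plan is to verify the two defining properties of a weight for $\varphi_1$: (i) well-definedness as a function on projective space, i.e. independence of the choice of representative $x$, and (ii) $\GL(L)$-invariance, together with the normalization that the maximum value is $1$. Throughout I would work with the description of $\varphi_1$ in terms of the quantity $\mathrm{N}_x$, so the first task is to see that $\mathrm{N}_x$ itself is well-defined.

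First I would check that $\mathrm{N}_x$ does not depend on the auxiliary scalar $\lambda$. If $\lambda, \mu \in D^\times$ both satisfy $x\lambda, x\mu \in L$, then $x\mu = (x\lambda)(\lambda^{-1}\mu)$, and the construction $l \mapsto \a_l$ respects right multiplication by elements of $D^\times$ up to a principal left ideal factor on the reduced-norm side; more precisely one checks that $\mathrm{nr}(\a_{x\mu})$ and $\mathrm{nr}(\a_{x\lambda})$ differ by a principal fractional $\MZ_K$-ideal, hence define the same class $[\mathrm{nr}(\a_{x\lambda})]$ in the ideal class group. Since $\mathrm{N}_x$ is defined by taking the minimum of $\mathrm{N}_{K/\MQ}(\mathrm{nr}(I)^d)$ over all integral $I \triangleleft \O$ with $[\mathrm{nr}(I)]$ in that fixed class, the value depends only on the class and hence only on the point $[x] \in \mathbf{P}(D^n)$, not on $x$ or $\lambda$. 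Replacing $x$ by $x\delta$ for $\delta \in D^\times$ falls under the same computation, so $\varphi_1$ descends to $\mathbf{P}(D^n)$.

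Next I would establish $\GL(L)$-invariance. For $g \in \GL(L) = \Lambda^*$ and $0 \neq l \in L$ we have $gl \in L$, and writing $l = \sum e_i l_i$ one must compare $\a_{gl}$ with $\a_l$. The key point is that $g$ acts as an automorphism of the $\O$-lattice $L$, so it permutes the relevant ideal data and the induced map on the set of $\O$-ideal classes $[\mathrm{nr}(\a_l)]$ is trivial — this is exactly the content of \cite[Lemma 4.3]{MaxMin}, which I would cite for the detailed bookkeeping with the pseudo-basis $e_1\c_1 \oplus \dots \oplus e_n\c_n$. Consequently $\mathrm{N}_{gl} = \mathrm{N}_l$, so $\varphi_1(gl) = \varphi_1(l)$, giving invariance. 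Finally, for the normalization: by construction $\mathrm{N}_x \geq 1$ for all $x$ since it is the minimal index $|\O/I|$ over integral ideals $I$ in a fixed class (an integral ideal has norm at least $1$), with equality precisely when that class is trivial, i.e. when some integral ideal in the class is all of $\O$. Such $x$ exist (take $l \in L$ generating a free rank-one direct summand, so that $\a_l = \O$), hence $\mathrm{N}_x = 1$ is attained, and therefore $\varphi_1 = \mathrm{N}_x^{-2/|K:\MQ|}$ takes values in $\MR_{>0}$ with maximum $1$.

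The main obstacle I anticipate is the first step: carefully tracking how $\a_l$ transforms under right multiplication $l \mapsto l\lambda$ in a possibly noncommutative $D$, and confirming that the reduced-norm ideal $\mathrm{nr}(\a_{l\lambda})$ changes only by a principal factor so that the ideal class — and hence $\mathrm{N}_x$ — is genuinely intrinsic to $[x]$. Once one grants the structural facts about $\a_l$ packaged in \cite[Lemma 4.3]{MaxMin}, the $\GL(L)$-invariance and the maximum-$1$ normalization are then essentially immediate, and the proof reduces to invoking that lemma.
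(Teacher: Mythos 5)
The paper does not actually prove this proposition; it imports it wholesale from \cite[Lemma 4.3]{MaxMin}, so there is no internal argument to compare against. Judged on its own terms, your outline identifies the right three things to check (independence of the representative and of the auxiliary scalar $\lambda$, $\GL(L)$-invariance, and the normalization $\max\varphi_1=1$), and the first and third are handled correctly: from $l=\sum e_i l_i$ one gets $\a_{l\delta}=\a_l\delta$, hence $\mathrm{nr}(\a_{x\mu})=\mathrm{nr}(\a_{x\lambda})\,\mathrm{nr}(\lambda^{-1}\mu)$, and since the relevant equivalence on $\MZ_K$-ideals is (at least as coarse as) modulo principal ideals generated by reduced norms of elements of $D^\times$, the class and hence $\mathrm{N}_x$ depend only on $[x]$; and $\mathrm{N}_x=\min_I|\O/I|\ge 1$ with equality attained on the class of $\O$, which is realized because $L\cong\O^{n-1}\oplus\c$ has a free rank-one summand for $n\ge 2$.

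The genuine weak point is the $\GL(L)$-invariance step: as written you justify it by citing ``the content of \cite[Lemma 4.3]{MaxMin}'', which is precisely the statement being proved, so the argument is circular at exactly the place where some bookkeeping is required. The fix is short and you should supply it: for $g\in\GL(L)=\End_\O(L)^\times$ write $ge_i=\sum_j e_j g_{ji}$; the condition $gL\subseteq L$ forces $g_{ji}\c_i\subseteq\c_j$, equivalently $\c_j^{-1}g_{ji}\subseteq\c_i^{-1}$, whence $\a_{gl}=\sum_j\c_j^{-1}\bigl(\sum_i g_{ji}l_i\bigr)\subseteq\sum_i\c_i^{-1}l_i=\a_l$; applying the same inclusion to $g^{-1}$ and $gl$ gives $\a_l\subseteq\a_{gl}$, so in fact $\a_{gl}=\a_l$ on the nose and invariance of $\varphi_1$ follows without any appeal to the target lemma. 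Two smaller points worth making explicit: the equivalence $[\,\cdot\,]$ on reduced-norm ideals should be taken modulo $\mathrm{nr}(D^\times)$ (a ray class group when $D$ is ramified at real places of $K$), which is exactly what your $\lambda$-independence computation produces; and the existence of a point with $\mathrm{N}_x=1$ uses $n\ge 2$ (for $n=1$ the class $[\mathrm{nr}(\c_1^{-1})]$ is fixed and need not be trivial), so the normalization claim should be restricted or the weight rescaled in that degenerate case.
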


We mention the strange looking weight $\varphi_1$ at this point as in some cases (e.g. when $D$ is an imaginary 
quadratic number field) it is in some sense more natural to work with $\varphi_1$ instead of 
$\varphi_0$ (see \cite{CoulangeonHabil} for an exposition). For the remainder of this section we will now fix - 
in addition to the lattice $L$ - a weight 
$\varphi$.

\begin{definition}
 \begin{enumerate}
  \item Given $F \in \P$ we define the $L$-minimum of $F$ with respect to $\varphi$ to be
  \begin{equation}
   \mathrm{min}_L(F):=\min_{0 \neq l \in L} \varphi(l)F\left[l\right].
  \end{equation}
\item The set of shortest vectors in $L$ with respect to $F$ and $\varphi$ is then denoted by
\begin{equation}
 \S_L(F):=\{ l \in L ~|~ \varphi(l)F\left[l\right]=\mathrm{min}_L(F) \}.
\end{equation}
 \end{enumerate}
\end{definition}
The following remark is essential for all computations with these definitions.
\begin{remark}
 Let $F \in \P$. The set $\S_L(F)$ is finite, since
 \begin{equation}
  \S_L(F) \subset \left\{0 \neq l \in L~|~ F\left[l\right] \leq \frac{\mathrm{min}_L(F)}{\min_{0 \neq y \in L} 
\varphi(y)}\right\}
 \end{equation}
which is a set of shortest vectors in a $\MZ$-lattice and hence finite. The set is well-defined because 
$\varphi$ 
attains only finitely many values.
\end{remark}
We are now prepared to define the cell decomposition of $\P$ with respect to the lattice $L$ and the weight 
$\varphi$.
\begin{definition}
 \begin{enumerate}
  \item Let $F \in \P$. $\Cl_L(F):=\{ H \in \P~|~ \S_L(H)=\S_L(F) \}$ is called the minimal class of $F$.
  \item If $C=\Cl_L(F)$ is a minimal class we set $\S_L(C):=\S_L(F)$.
  \item A minimal class $C$ is called well-rounded if $\S_L(C)$ contains a $D$-basis of $V$.
  \item $F \in \P$ is called perfect if $\Cl_L(F)=\{aF~|~a \in \MR_{>0}\}$.
 \end{enumerate}
\end{definition}
Note that all these definitions depend on the choice of the lattice $L$ as well as the choice of the 
weight $\varphi$. 

\begin{remark}
 $\GL(L)$ acts on $\P$ via $gF:=gFg^\dagger$ and this action respects the decomposition of $\P$ into the 
minimal classes. We therefore get a natural action of $\GL(L)$ on the set of minimal classes.
\end{remark}
\begin{lemma}[\protect{\cite[Lemma 5.3]{MaxMin}}]
 Let $C \subset \P$ be a well-rounded minimal class and set $T_C:=\sum_{x \in \S_L(C)}xx^\dagger$. Then $T_C$ 
is a positive definite form, $\stab_{\GL(L)}(C)=\stab_{\GL(L)}(T_C^{-1})$, and the class $C'$ is in the same 
$\GL(L)$-orbit as $C$ if and only if $T_C^{-1}$ and $T_{C'}^{-1}$ lie in the same $\GL(L)$-orbit.
\end{lemma}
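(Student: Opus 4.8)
The plan is to establish the three assertions in order, since each builds on the previous one.

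First I would show that $T_C := \sum_{x \in \S_L(C)} xx^\dagger$ is positive definite. By construction $T_C \in \overline{\P}$, being a sum of elements $xx^\dagger$ which are positive semidefinite (recall from the earlier lemma that $xx^\dagger \in \Sigma$ and $F[y] = \langle F, yy^\dagger\rangle$, so $\langle xx^\dagger, yy^\dagger\rangle = |\langle x, y\rangle|^2 \ge 0$ in the appropriate sense). To upgrade this to definiteness I would use that $C$ is well-rounded: $\S_L(C)$ contains a $D$-basis $x_1, \dots, x_n$ of $V$. Then for $0 \neq y \in V_\MR$ we have $\langle T_C, yy^\dagger\rangle \ge \sum_{i=1}^n \langle x_ix_i^\dagger, yy^\dagger\rangle$, and since the $x_i$ span $V_\MR$ over $\MR$ (as a $D$-basis is in particular an $\MR$-generating set after scalar extension), not all the pairings $\langle x_i, y\rangle$ can vanish; hence the sum is strictly positive. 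Therefore $T_C$ defines a positive definite form, i.e. $T_C \in \P$, and so $T_C^{-1} \in \P$ as well (the cone $\P$ is closed under inversion since inversion acts componentwise on positive definite matrices over $\MR, \MC, \MH$).

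Next I would prove $\stab_{\GL(L)}(C) = \stab_{\GL(L)}(T_C^{-1})$. The key observation is that the shortest-vector set transforms equivariantly: for $g \in \GL(L)$, one checks $\S_L(gF) = g\,\S_L(F)$ using $gF[x] = F[g^{-1}x]$ together with $\GL(L)$-invariance of the weight $\varphi$ (this is the earlier remark that the $\GL(L)$-action respects the minimal-class decomposition). Consequently $g$ stabilizes $C$ if and only if $g\,\S_L(C) = \S_L(C)$, and in that case
\begin{equation*}
 g T_C g^\dagger = \sum_{x \in \S_L(C)} (gx)(gx)^\dagger = \sum_{x' \in g\S_L(C)} x'(x')^\dagger = T_C,
\end{equation*}
so $g$ fixes $T_C$ under the action $g \cdot F = gFg^\dagger$. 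The action on inverses is $g \cdot F^{-1} = (g^{-\dagger}) F^{-1} (g^{-\dagger})^\dagger$, or equivalently $g \cdot (T_C^{-1}) = (g T_C g^\dagger)^{-1}$, so $g$ fixes $T_C$ iff it fixes $T_C^{-1}$. For the reverse inclusion I need that $g T_C g^\dagger = T_C$ forces $g\,\S_L(C) = \S_L(C)$; here I would argue that $T_C$ (hence $C$) is recoverable from the data, but more carefully: since $g$ permutes $\S_L(L)$-type configurations and $\S_L(C)$ is determined as the minimum vectors of $T_C^{-1}$ up to the standard Voronoi relationship, fixing $T_C^{-1}$ pins down $\S_L(C)$ setwise. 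I expect this direction to be the main obstacle, as it requires knowing that $C$ is determined by $T_C$ — this should follow because $\S_L(C) = \S_L(T_C^{-1})$ (the well-rounded class contains, and is the minimal class of, the barycentric form's inverse), which is essentially the content tying the combinatorics to the geometry.

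Finally, for the orbit statement: if $C' = gC$ then by the equivariance just established $T_{C'} = T_{gC} = \sum_{x \in g\S_L(C)} xx^\dagger = g T_C g^\dagger$, hence $T_{C'}^{-1} = (g T_C g^\dagger)^{-1}$ lies in the $\GL(L)$-orbit of $T_C^{-1}$. Conversely, if $T_{C'}^{-1} = g \cdot T_C^{-1}$ for some $g \in \GL(L)$, then $T_{C'} = g T_C g^\dagger$, and applying the argument that $\S$ is determined by the barycenter's inverse we get $\S_L(C') = g\,\S_L(C)$, whence $C' = gC$. Throughout, the one genuinely delicate point is the claim $\S_L(C) = \S_L(T_C^{-1})$, i.e. that the inverse of the barycentric form of a well-rounded class has that class's shortest vectors as its own minimal vectors; this is the analogue of the classical fact in Voronoi theory that a perfect form is the barycenter of the minimal vectors of its Voronoi-dual, and I would either cite it from \cite{MaxMin} or \cite{Opgenorth} or reprove it via the self-duality of $\P$ and the extremality characterization of $T_C$.
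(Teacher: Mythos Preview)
The paper does not give its own proof of this lemma; it is quoted verbatim from \cite[Lemma 5.3]{MaxMin} and used as a black box. So there is nothing in the present paper to compare your argument against.

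That said, a few remarks on your proposal itself. Your proof that $T_C$ is positive definite is fine and is exactly the standard argument: well-roundedness supplies a $D$-basis inside $\S_L(C)$, and a nonzero vector cannot be orthogonal to all of them. The forward inclusions (``$g$ stabilises $C$ implies $g$ stabilises $T_C$'', and the corresponding orbit direction) are also correct and follow from the equivariance $\S_L(g\cdot F)=g\cdot\S_L(F)$.

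There are, however, two soft spots. First, the line ``$g$ fixes $T_C$ iff it fixes $T_C^{-1}$'' is not justified as written: under the action $g\cdot F=gFg^\dagger$ one has $(gT_Cg^\dagger)^{-1}=g^{-\dagger}T_C^{-1}g^{-1}$, which is \emph{not} $g\cdot T_C^{-1}$ in general. You need an extra word here explaining why passing to the inverse is harmless for the stabiliser in $\GL(L)$ (e.g.\ because the relevant stabilisers are groups and the two actions are intertwined by $g\mapsto g^{-\dagger}$, together with the fact that for a positive definite form the two resulting finite groups coincide). Second, you correctly isolate the genuine content of the reverse inclusion, but the specific claim you propose, $\S_L(T_C^{-1})=\S_L(C)$, is stronger than what is needed and is not obviously true for an arbitrary well-rounded class. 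The argument in \cite{MaxMin} instead shows that a suitable scalar multiple of $T_C^{-1}$ lies in $\overline{C}$ (indeed in $C$), from which $\stab_{\GL(L)}(T_C^{-1})\subseteq\stab_{\GL(L)}(C)$ and the orbit statement follow immediately, without ever needing to identify the shortest vectors of $T_C^{-1}$ exactly. Your instinct to cite this from \cite{MaxMin} rather than reprove it is the right one.
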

Thus it is computationally easy to construct stabilizers of minimal classes or check whether two minimal classes lie in the same $\GL(L)$ orbit.
\begin{remark}
 The above lemma directly implies that $\stab_{\GL(L)}(C)$ is finite for any well-rounded class $C$.
\end{remark}

\subsection{A closer look at the decomposition}

Let us now take a closer look at our decomposition and its geometry. 

First of all note that $\P$ is a convex and thus contractible topological space. However we do not want to use $\P$ for our computations since we have already seen that we can only guarantee finiteness of cell stabilizers for well-rounded minimal classes. Let us introduce some additional notation.
\begin{definition}
 \begin{enumerate}
  \item We will use the notation $\WR:=\{F \in \P~|~ F \text{ well-rounded}\}$ for the space of well-rounded 
forms.
\item $\WRF$ will denote the space of well-rounded forms whose $L$-minimum is $1$.
 \end{enumerate}
 Since any form may be rescaled to have minimum $1$ we will from now on think of well-rounded minimal classes as subsets of 
$\WRF$.
 \end{definition}

The following result shows us that $\WRF$ is still a suitable candidate to help us in constructing a free resolution.
\begin{proposition}[\protect{\cite[Thm. 1]{SouleCohomology}}]
 The space $\WR$ (and thus $\WRF$) is a ($\GL(L)$-invariant) deformation retract of $\P$ and thus contractible.
\end{proposition}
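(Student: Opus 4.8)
The plan is to adapt the classical Voronoi‐theoretic retraction argument (as in Ash \cite{ash1984} and Sou\-l\'e \cite{SouleCohomology}, and in the imaginary quadratic / $\PSL_4$ settings of \cite{GunnelsCohomologyOfLinearGroups,PSL4}) to our general cone $\P \subset \Sigma$ equipped with the admissible set $M_L$ and the weight $\varphi$. The key structural inputs are already in place: $\P$ is an open self-dual cone (second Lemma of the ``cone of positive definite forms'' subsection together with the remark that $\P$ is self-dual in the sense of \cite{Opgenorth}), $M_L=\{ll^\dagger\mid 0\neq l\in L\}$ is discrete and admissible (Lemma \ref{admissiable}), and for each $F\in\P$ the set $\S_L(F)$ of $\varphi$-shortest vectors is finite and nonempty. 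What has to be produced is an explicit, continuous, $\GL(L)$-equivariant deformation retraction of $\P$ onto $\WR$.

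First I would recall the standard construction of the retraction. For $F\in\P$, rescale so that $\mathrm{min}_L(F)=1$, and form the ``minimal vector polytope''
\begin{equation}
\pi(F):=\mathrm{conv}\{\,\varphi(l)\,ll^\dagger \mid l\in\S_L(F)\,\}\subset\overline{\P}.
\end{equation}
By admissibility of $M_L$ the set $\S_L(F)$ spans a bounded polytope, and one shows that $\pi(F)$ meets $\P$ itself precisely when $F$ is well-rounded: indeed, by the self-duality of $\P$ and the third part of the Lemma on $\P$, a nonnegative combination $\sum_l c_l\,\varphi(l)\,ll^\dagger$ lies in $\P$ if and only if the $l$ occurring with $c_l>0$ span $V$ over $D$, which is exactly the well-roundedness condition on the class $\Cl_L(F)$. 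One then defines the retraction by flowing $F$ along the ray towards $\pi(F)$: concretely, following Sou\-l\'e, set $\rho_t(F)$ to be the point where the segment from $F$ towards (a canonical point of) $\pi(F)$, reparametrised so that the $L$-minimum stays $1$, has been traversed a fraction $t$; as $t$ increases the minimal vector set $\S_L(\rho_t(F))$ can only grow, and for $t$ large enough it contains a $D$-basis, landing us in $\WR$. Equivariance is immediate from $g\cdot(ll^\dagger)=(gl)(gl)^\dagger$, $\GL(L)$-invariance of $\varphi$, and hence $g\cdot\pi(F)=\pi(gF)$; and on $\WR$ itself the homotopy is the identity, so it is genuinely a deformation retract. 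Contractibility of $\WR$ (and of $\WRF$, which is the slice $\mathrm{min}_L=1$ and is homeomorphic to $\WR/\MR_{>0}$) then follows from contractibility of the convex set $\P$.

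The main obstacle is continuity of the retraction at the boundaries between minimal classes, i.e. at forms $F$ where $\S_L(F)$ jumps. There the combinatorial type of the polytope $\pi(F)$ changes discontinuously, and one has to check that the distinguished target point inside $\pi(F)$, and the stopping time $t$, nevertheless vary continuously with $F$; this is the delicate point in \cite{SouleCohomology,ash1984} and is handled by choosing the target point to be characterised by a continuous optimisation problem (e.g. the point of $\pi(F)$ closest to $F$ in the inner product $\langle\,,\rangle$, or the ``barycentric'' point, depending on the chosen normalisation) rather than by an explicit vertex description. A secondary technical point is that $\varphi$ takes only finitely many values and is $\GL(L)$-invariant but not continuous on $L-\{0\}$; one must verify that the relevant minima and the resulting flow are still continuous on $\P$, which again reduces to the finiteness of $\S_L(F)$ and a local argument near each $F$. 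Since the statement is quoted from \cite[Thm.~1]{SouleCohomology}, in the write-up I would record the construction of $\pi$ and the flow, emphasise the two points above (the well-roundedness $\Leftrightarrow$ $\pi(F)\cap\P\neq\emptyset$ equivalence, via self-duality, and the continuity at class boundaries), and cite \cite{SouleCohomology} for the detailed continuity verification, noting that the presence of the weight $\varphi$ changes nothing essential because it only rescales finitely many of the rank-one forms $ll^\dagger$.
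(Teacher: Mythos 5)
The paper contains no proof of this proposition at all: it is imported verbatim from \cite[Thm.~1]{SouleCohomology} (resting on Ash's well-rounded retract), so there is nothing internal to compare against and your sketch has to stand on its own. It does not, because the flow you propose is not the one used in the literature and fails at the very first step. If you move $F$ along the segment towards a point $P=\sum_{l\in\S_L(F)}c_l\varphi(l)ll^\dagger$ of $\pi(F)$, then for $l\in\S_L(F)$ you get $\varphi(l)F_t\left[l\right]=(1-t)\,\mathrm{min}_L(F)+t\,\varphi(l)\langle P,ll^\dagger\rangle$, and the quantities $\varphi(l)\langle P,ll^\dagger\rangle$ are in general \emph{different} for different $l\in\S_L(F)$ (neither the closest point of $\pi(F)$ nor its barycenter makes them equal). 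Hence for every $t>0$ the current minimal vectors no longer all attain the minimum, and rescaling to keep $\mathrm{min}_L=1$ does not repair this; so $\S_L(\rho_t(F))\not\supseteq\S_L(F)$ and the path immediately leaves $\overline{\Cl_L(F)}$, contradicting your central claim that the minimal vector set ``can only grow'' along the flow. Your assertion that the homotopy is the identity on $\WR$ also fails for this flow, since a well-rounded but non-perfect $F$ is genuinely moved towards $\pi(F)$.

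The retraction that actually works (Ash, Sou\-l\'e) goes the other way around: let $W$ be the $D_\MR$-submodule of $V_\MR$ generated by $\S_L(F)$, keep $F$ fixed on $W$, and contract the $F$-orthogonal complement of $W$ by $e^{-t}$. This fixes $\varphi(l)F_t\left[l\right]$ for \emph{all} current minimal vectors simultaneously, strictly decreases $\varphi(m)F_t\left[m\right]$ for every $m$ with a nonzero component outside $W$, and by admissibility (Lemma \ref{admissiable}) forces some new vector to reach the minimum at a finite time, so $\S_L$ is genuinely nondecreasing and finitely many iterations land in $\WR$; the delicate points are then that the stopping times are continuous and that the construction is canonical (the complement is taken with respect to $F$ itself, giving $\GL(L)$-equivariance). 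Your surrounding remarks --- finiteness of $\S_L(F)$, the characterisation of well-roundedness via $\pi(F)\cap\P\neq\emptyset$, equivariance from $g\cdot ll^\dagger=(gl)(gl)^\dagger$ together with $\GL(L)$-invariance of $\varphi$, and the observation that the weight merely rescales finitely many rank-one forms --- are correct and worth recording, but they do not salvage the proposed flow.
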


As we already know about some finiteness results it is important to note that the action of $\GL(L)$ on the cells in $\WRF$ 
admits only finitely many orbits. To see this we need the following result of A. Ash.
\begin{theorem}[\protect{\cite[Thm. (ii)]{ash1984}}]
 The set $\WRF/\GL(L)$ is compact.
\end{theorem}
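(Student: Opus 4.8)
The plan is to cover $\WRF$ by the closures, taken in $\P$, of the well-rounded minimal classes, to check that each such piece is compact and again lies in $\WRF$, and to feed this into the finiteness up to $\GL(L)$ of the set of well-rounded minimal classes; compactness of the quotient then follows formally. Every $F\in\WRF$ lies in its own minimal class, which is well-rounded, so $\WRF=\bigcup_{C}(C\cap\WRF)$ as $C$ runs over the well-rounded classes. Granting that $\overline{C\cap\WRF}\subseteq\WRF$ is compact for each such $C$, we obtain $\WRF=\bigcup_{C}\overline{C\cap\WRF}$; choosing representatives $C_1,\dots,C_m$ of the $\GL(L)$-orbits and using $g\cdot\overline{C\cap\WRF}=\overline{(gC)\cap\WRF}$ (the action is by homeomorphisms of $\P$ and preserves well-roundedness and $\mathrm{min}_L$) gives $\WRF=\GL(L)\cdot\mathcal K$ with $\mathcal K:=\bigcup_{j=1}^m\overline{C_j\cap\WRF}$ compact, and the projection $q\colon\WRF\to\WRF/\GL(L)$ carries the compact set $\mathcal K$ onto the entire quotient.

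For the compactness of $\overline{C\cap\WRF}$, fix a well-rounded class $C$, put $\mathcal S:=\S_L(C)$ (a finite set containing a $D$-basis of $V$) and $T_C:=\sum_{x\in\mathcal S}xx^\dagger$, which is positive definite by \cite[Lemma 5.3]{MaxMin}. Then
\[
 C\cap\WRF\ \subseteq\ Z_C:=\bigl\{\,H\in\overline{\P}\mid \varphi(x)H[x]=1\ (x\in\mathcal S),\ \varphi(y)H[y]\geq 1\ (0\neq y\in L)\,\bigr\},
\]
because for $H\in C\cap\WRF$ one has $\S_L(H)=\mathcal S$ and $\mathrm{min}_L(H)=1$, whence all the displayed relations; conversely any $H\in Z_C$ has $\mathrm{min}_L(H)=1$ (the equalities give $\leq 1$, the inequalities give $\geq 1$) and $\S_L(H)\supseteq\mathcal S$, so $H$ is well-rounded provided $H\in\P$, and thus $Z_C\subseteq\WRF$. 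It therefore suffices to see that $Z_C$ is compact and lies in $\P$. On $Z_C$ the linear functional $\langle\,\cdot\,,T_C\rangle$ is constant, equal to $\sum_{x\in\mathcal S}H[x]=\sum_{x\in\mathcal S}\varphi(x)^{-1}=:c$; since $\P$ is self-dual and $T_C\in\P$ lies in the interior of $\overline{\P}$ (so that $\langle F,T_C\rangle>0$ for all $F\in\overline{\P}\setminus\{0\}$, by the positivity of $\langle\,\cdot\,,\,\cdot\,\rangle$ on $\P$ recorded above, passed to the closure), the level set $\{H\in\overline{\P}\mid\langle H,T_C\rangle=c\}$ is compact, and $Z_C$, being closed in $\Sigma$, is a compact subset of it. Finally, if $H\in\overline{\P}\setminus\P$ then $H$ is positive semidefinite with nontrivial radical, so the Minkowski lattice-point argument in the proof of Lemma \ref{admissiable} yields $0\neq l\in L$ with $H[l]$ arbitrarily small; choosing $H[l]<1\leq\varphi(l)^{-1}$ (note $\varphi\leq 1$) contradicts $\varphi(l)H[l]\geq 1$. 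Hence $Z_C\subseteq\P$, and $\overline{C\cap\WRF}\subseteq Z_C$ is compact.

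The one substantive point that remains is the finiteness of $\GL(L)\backslash\{\text{well-rounded minimal classes}\}$, which is Voronoi's reduction theory in the present setting and which I expect to be the only real obstacle. The admissible set $M_L=\{ll^\dagger\mid 0\neq l\in L\}$ (Lemma \ref{admissiable}) induces, via Opgenorth's theory of dual cones, a $\GL(L)$-invariant polyhedral fan on the self-dual cone $\overline{\P}$ whose cones are the $\mathrm{cone}\{xx^\dagger\mid x\in\S_L(C)\}$ attached to the minimal classes $C$, and one shows this fan has only finitely many $\GL(L)$-orbits of faces (see \cite{ash1984} and \cite{Opgenorth}): a perfect cone has finitely many facets since $\S_L$ is finite, each facet is shared with exactly one neighbouring perfect cone, and Voronoi's algorithm, started from one perfect cone, exhausts an orbit transversal in finitely many steps — termination being forced by Hermite's inequality together with the lemma encoding orbits of classes by orbits of the forms $T_C^{-1}$. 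Since the well-rounded classes are exactly those $C$ whose cone meets $\P$ (equivalently, with $\S_L(C)$ containing a $D$-basis of $V$), they form finitely many orbits, and the proof is complete. One could instead run the argument through the Borel--Harish-Chandra reduction theory of the arithmetic group $\GL(L)$: for a Siegel set $\mathfrak S$ with $\GL(L)\mathfrak S=\P$, no well-rounded form lies near a cusp, because approaching a cusp confines all sufficiently short lattice vectors to the proper subspace of $V$ attached to the corresponding parabolic, against well-roundedness; thus $\WRF\cap\mathfrak S$ is relatively compact, $\WRF$ is closed in $\P$, and $\WRF/\GL(L)=q(\overline{\WRF\cap\mathfrak S})$ is compact — the analogous obstacle here being precisely the relative compactness of $\WRF\cap\mathfrak S$.
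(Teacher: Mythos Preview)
The paper does not prove this theorem at all; it is quoted verbatim from \cite{ash1984} and then used to \emph{derive} the finiteness results that follow (finitely many perfect forms, then finitely many well-rounded classes). Your main argument runs this implication in reverse: you reduce compactness of $\WRF/\GL(L)$ to the finiteness of $\GL(L)$-orbits of well-rounded minimal classes. Relative to the paper's logical structure this is circular, and you are aware of it --- you flag the finiteness as ``the only real obstacle'' and sketch two ways around it.

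Your local analysis is sound: the set $Z_C$ is indeed closed, lies in the compact slice $\{H\in\overline{\P}\mid\langle H,T_C\rangle=c\}$, and the admissibility argument from Lemma~\ref{admissiable} correctly excludes boundary points of $\P$, so each $\overline{C\cap\WRF}$ is compact and contained in $\WRF$. The gap is entirely in the global finiteness. Your Voronoi sketch is not a proof: termination of the neighbour algorithm in this generality requires an a~priori bound on the number of $\GL(L)$-classes of perfect forms, and invoking ``Hermite's inequality'' hides exactly the reduction-theoretic input that Ash's theorem encapsulates. (In \cite{Opgenorth} and related sources the finiteness is typically obtained \emph{from} a compactness statement of Mahler/Ash type, not the other way round.)

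Your second alternative --- take a Siegel set $\mathfrak S$, argue that well-rounded forms stay away from the cusps because near a cusp all short vectors are trapped in a proper rational subspace, conclude $\WRF\cap\mathfrak S$ is relatively compact --- is essentially Ash's own proof in \cite{ash1984}. So the honest summary is: the self-contained part of your argument (compactness of each cell closure) is correct and nicely done, but the step you defer is the whole theorem, and the route you gesture at which actually works is Ash's original one.
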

\begin{corollary}
 Up to positive real homotheties and the action of $\GL(L)$ there are only finitely many perfect forms.
\end{corollary}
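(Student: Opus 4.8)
The plan is to derive the statement from Ash's theorem together with a local finiteness property of the decomposition of $\WRF$ into minimal classes. First I would show that every $F\in\WRF$ has a neighbourhood in $\WRF$ meeting only finitely many minimal classes. For this one checks that $\mathrm{min}_L$ is continuous on $\P$: upper semicontinuity follows from $\mathrm{min}_L(H)\le\varphi(l_0)H[l_0]$ for a fixed $l_0\in\S_L(F)$, and lower semicontinuity from the observation that if $\mathrm{min}_L(H_i)$ stayed bounded below $\mathrm{min}_L(F)$ along some sequence $H_i\to F$, then by the finiteness remark above the associated shortest vectors would lie in a fixed finite subset of $L$, so that a constant subsequence would yield a contradiction. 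Granting this, the finiteness remark together with the continuity of $\mathrm{min}_L$ shows that $\S_L(H)$ stays inside a fixed finite subset of $L$ for all $H$ in a suitable neighbourhood of $F$, and a further comparison using that $\varphi(l)F[l]>\mathrm{min}_L(F)$ strictly for $l$ outside $\S_L(F)$ even gives $\S_L(H)\subseteq\S_L(F)$ there. Since a minimal class is determined by its set of shortest vectors and $\S_L(F)$ is finite, only finitely many minimal classes meet such a neighbourhood.

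Next I would observe that a perfect form is automatically well-rounded, so that after rescaling to $L$-minimum $1$ it is a point of $\WRF$. Assume $\mathrm{min}_L(F)=1$ and $\Cl_L(F)=\MR_{>0}F$, and consider the affine subspace $A:=\{H\in\Sigma\mid \varphi(l)H[l]=1\text{ for all }l\in\S_L(F)\}$, which contains $F$. By the inclusion obtained above, every $H\in A$ close enough to $F$ satisfies $\S_L(H)\subseteq\S_L(F)$, and since every $l\in\S_L(F)$ realizes the value $1$ on such $H$ this forces $\mathrm{min}_L(H)=1$ and $\S_L(H)=\S_L(F)$, i.e.\ $H\in\Cl_L(F)$. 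As $\Cl_L(F)\cap A=\{F\}$, the subspace $A$ must be zero-dimensional, hence the forms $ll^\dagger$ with $l\in\S_L(F)$ span $\Sigma$. Since the forms $xx^\dagger$ with $x$ ranging over $W\otimes_\MQ\MR$ span a proper subspace of $\Sigma$ for every proper $D$-subspace $W\subsetneq V$, the set $\S_L(F)$ must contain a $D$-basis of $V$.

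Finally I would combine local finiteness with Ash's compactness theorem. The quotient map $\WRF\to\WRF/\GL(L)$ is open, so the neighbourhoods above descend to an open cover of the compact space $\WRF/\GL(L)$; extracting a finite subcover produces $U_1,\dots,U_k\subseteq\WRF$, each meeting only finitely many minimal classes, with $\GL(L)\cdot(U_1\cup\dots\cup U_k)=\WRF$. Every well-rounded minimal class lies in $\WRF$, hence has a $\GL(L)$-translate meeting some $U_i$, so there are only finitely many $\GL(L)$-orbits of well-rounded minimal classes. Now each perfect form, once rescaled into $\WRF$, determines such a class, namely the single-point class consisting of that form, and by the way $\GL(L)$ acts on classes two perfect forms determine the same orbit precisely when they agree up to a positive homothety and the action of $\GL(L)$. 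Therefore the number of perfect forms, up to positive homotheties and $\GL(L)$, is at most the finite number of $\GL(L)$-orbits of well-rounded minimal classes.

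The step I expect to be the main obstacle is the local finiteness property of the first paragraph, i.e.\ controlling the shortest vectors of forms near a given one; once the continuity of $\mathrm{min}_L$ and the inclusion $\S_L(H)\subseteq\S_L(F)$ for $H$ near $F$ are in place, the remainder is soft point-set topology applied to Ash's compactness result. \qed
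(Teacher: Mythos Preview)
Your argument is correct, but it follows a genuinely different path from the paper's two-line proof. The paper simply quotes that the admissibility of $M_L$ (Lemma~\ref{admissiable}) makes \cite[Lemma~1.6]{Opgenorth} applicable, which yields directly that the set of perfect forms in $\WRF$ is \emph{discrete}; combining discreteness with Ash's compactness of $\WRF/\GL(L)$ then finishes the proof. You instead establish by hand a stronger local statement, namely that for $H$ close to $F$ one has $\S_L(H)\subseteq\S_L(F)$, and hence that only finitely many minimal classes meet a small neighbourhood of any point of $\WRF$. This buys you more: your argument simultaneously proves the later corollary that there are only finitely many $\GL(L)$-orbits of well-rounded minimal classes, which the paper obtains only after the structural theorem identifying closures of classes with convex hulls of perfect forms. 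The price is that you must also verify (your second paragraph) that perfect forms are well-rounded, so that they actually lie in $\WRF$; the paper takes this for granted via the reference to \cite{Opgenorth}. One small expository point: when you invoke ``the finiteness remark'' to trap the shortest vectors of nearby $H$ in a fixed finite set, you are implicitly using that $H\geq cF$ for $H$ close enough to $F$ in $\P$ (so that an $H$-ball is contained in an $F$-ball); this is routine but worth making explicit, since the remark in the paper only asserts finiteness of $\S_L$ for a single form.
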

\begin{proof} Lemma \ref{admissiable} implies that \cite[Lemma 1.6]{Opgenorth} holds. Hence the set of perfect 
forms in $\WRF$ is discrete. Together with Ash's aforementioned theorem this implies the assertion. \qed \end{proof}

To show that this finiteness result also holds for the minimal classes of non-perfect forms we first need some 
further results.
\begin{lemma}
 Let $C \subset \WRF$ be a minimal class. Then $C$ is convex.
\end{lemma}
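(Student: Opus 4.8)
The plan is to show that the defining condition of the minimal class $C$ — namely having a prescribed set of shortest vectors — is preserved under taking convex combinations within $\WRF$. Let me recall the setup: a minimal class is $\Cl_L(F)=\{H\in\P \mid \S_L(H)=\S_L(F)\}$, and after rescaling we think of $C$ as living in $\WRF$, so every form in $C$ has $L$-minimum equal to $1$. Fix $F_0, F_1 \in C$ and set $S:=\S_L(C)$. For $t\in[0,1]$ put $F_t:=(1-t)F_0+tF_1$; since $\P$ is convex, $F_t\in\P$, and I must show $F_t\in C$, i.e. $\S_L(F_t)=S$ and $\min_L(F_t)=1$.

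First I would observe that the map $F\mapsto \varphi(l)F[l]$ is affine-linear in $F$ for each fixed $l\in L$, because $F[l]=\langle F, ll^\dagger\rangle$ is linear in $F$ and $\varphi(l)$ is a constant. Hence for every $l\in L-\{0\}$ we have $\varphi(l)F_t[l]=(1-t)\varphi(l)F_0[l]+t\,\varphi(l)F_1[l]$. For $l\in S$ both endpoint values equal $1$ (the common $L$-minimum), so $\varphi(l)F_t[l]=1$ for all $t$; for $l\notin S$ both endpoint values are $>1$, so the convex combination is $>1$ as well. This immediately gives $\min_L(F_t)=\min_{0\neq l\in L}\varphi(l)F_t[l]=1$, with the minimum attained exactly on $S$, so $\S_L(F_t)=S=\S_L(C)$ and therefore $F_t\in C$. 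The only subtlety is that $\min_L$ is an infimum over the infinitely many nonzero lattice vectors, and I need the infimum to actually be $1$ and to be attained precisely on $S$; this follows because, as noted in the earlier remark, the candidates for shortest vectors at any $F_t\in\P$ lie in a finite set (shortest vectors of an auxiliary $\MZ$-lattice), so the infimum is a genuine minimum, and the affine estimate above shows every vector outside $S$ gives a strictly larger value at each $F_t$.

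The main obstacle — really the only thing requiring care — is justifying the strict inequality for $l\notin S$ uniformly enough to conclude $\S_L(F_t)=S$ rather than merely $S\subseteq\S_L(F_t)$. Since $F_0,F_1\in C$ means $\S_L(F_0)=\S_L(F_1)=S$ exactly, any $l\notin S$ satisfies $\varphi(l)F_0[l]>1$ and $\varphi(l)F_1[l]>1$ strictly, and a convex combination of two reals both exceeding $1$ still exceeds $1$; combined with the finiteness of the relevant vector set this rules out any new short vector appearing at an intermediate $F_t$. Thus $\S_L(F_t)=S$ and $C$ is convex.

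\begin{proof}
Fix $F_0, F_1 \in C$ and write $S:=\S_L(C)$. For $t \in [0,1]$ set $F_t:=(1-t)F_0 + tF_1$; since $\P$ is convex we have $F_t \in \P$. For each $0 \neq l \in L$ the function $F \mapsto \varphi(l)F[l] = \varphi(l)\langle F, ll^\dagger\rangle$ is affine-linear in $F$, so
\begin{equation}
 \varphi(l)F_t[l] = (1-t)\,\varphi(l)F_0[l] + t\,\varphi(l)F_1[l].
\end{equation}
If $l \in S$, then $\varphi(l)F_0[l] = \varphi(l)F_1[l] = \min_L(F_0) = \min_L(F_1) = 1$ (recall we regard $C \subset \WRF$), hence $\varphi(l)F_t[l] = 1$ for all $t$. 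If $l \notin S$, then $\varphi(l)F_0[l] > 1$ and $\varphi(l)F_1[l] > 1$ since $\S_L(F_0) = \S_L(F_1) = S$, and therefore $\varphi(l)F_t[l] > 1$. By the remark preceding the definition of minimal classes, $\min_L(F_t)$ is attained on the finite set of shortest vectors of an auxiliary $\MZ$-lattice, so it is a genuine minimum; the above shows $\min_L(F_t) = 1$ and that the minimum is attained exactly on $S$. Thus $\S_L(F_t) = S = \S_L(C)$, so $F_t \in C$, and $C$ is convex. \qed
\end{proof}
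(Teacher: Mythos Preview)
Your proof is correct and follows essentially the same approach as the paper: both exploit the affine-linearity of $F\mapsto\varphi(l)F[l]$ to show that a convex combination of two forms in $C$ still attains the value $1$ precisely on $\S_L(C)$ and is strictly larger elsewhere. Your additional remark about finiteness of the candidate shortest vectors is a harmless bit of extra care that the paper leaves implicit.
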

\begin{proof} Let $F_1, F_2 \in C$ and $\lambda \in \left[0,1\right]$. Then for all $l \in L$ we have
\begin{equation}
 \lambda \varphi(l) F_1\left[l\right]+(1-\lambda)\varphi(l)F_2\left[l\right] \geq \lambda \mathrm{min}_L(F_1)+ 
(1-\lambda) \mathrm{min}_L(F_2)=1. 
\end{equation}
And equality holds exactly for $l \in \S_L(F_1)=\S_L(F_2)$. Hence $C$ is convex. \qed \end{proof}

\begin{definition}
 On the set of minimal classes we define the following partial ordering:
 \begin{equation}
  C \preceq C' :\Leftrightarrow \S_L(C) \subset \S_L(C').
 \end{equation}

\end{definition}
Obviously this partial ordering is compatible with the $\GL(L)$-action. In fact the following lemma shows that it coincides with the inclusion ordering arising from the cell decomposition.
\begin{lemma}
 Let $C \subset \WRF$ be a minimal class and $\overline{C}$ its topological closure in $\WRF$. Then
 \begin{equation}
  \overline{C} = \bigcup_{C' \succeq C} C'.
 \end{equation}
\end{lemma}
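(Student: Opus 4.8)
I would prove the two inclusions separately. The statement to establish is that for a minimal class $C \subset \WRF$, the closure $\overline{C}$ (inside $\WRF$) equals $\bigcup_{C' \succeq C} C'$, where $C' \succeq C$ means $\S_L(C) \subset \S_L(C')$.

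\emph{Step 1: $\bigcup_{C' \succeq C} C' \subseteq \overline{C}$.} It suffices to show that any $C'$ with $C \preceq C'$ satisfies $C' \subseteq \overline{C}$, and for this it is enough to produce, for a fixed $F' \in C'$ and a fixed $F \in C$, a path inside $\WRF$ from a point of $C$ to $F'$ landing in $C$ except at the endpoint. The natural candidate is the segment $F_\lambda := \lambda F + (1-\lambda) F'$ for $\lambda \in (0,1]$ (after rescaling to $L$-minimum $1$, which is harmless since everything is scale-invariant). Using the convexity computation already carried out in the preceding lemma: for $l \in L$ one has $\varphi(l)F_\lambda[l] = \lambda\varphi(l)F[l] + (1-\lambda)\varphi(l)F'[l] \geq \lambda\min_L(F) + (1-\lambda)\min_L(F') = 1$, with equality \emph{iff} $l \in \S_L(F) \cap \S_L(F')$. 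Since $\S_L(C) \subset \S_L(C')$, this intersection is exactly $\S_L(C)$, so $\S_L(F_\lambda) = \S_L(C)$ and hence $F_\lambda \in C$ for all $\lambda \in (0,1]$; in particular $\min_L(F_\lambda)=1$ so no rescaling is even needed. Letting $\lambda \to 0^+$ shows $F' = F_0 \in \overline{C}$.

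\emph{Step 2: $\overline{C} \subseteq \bigcup_{C' \succeq C} C'$.} Let $F$ be a limit of a sequence $(F_i)_i \subset C$; since $\WRF$ is closed under nothing obvious I should be careful, but the proposition of Soul\'e guarantees $\WRF$ is a deformation retract of $\P$, hence $F \in \WRF \subset \P$, so $F$ is positive definite with $\min_L(F) = 1$ (the minimum function $F \mapsto \min_L(F)$ is continuous, being a finite min of continuous functions locally, so it stays $1$ in the limit — I would spell this out via the remark bounding $\S_L$ by a fixed bounded region). Now I claim $\S_L(C) \subseteq \S_L(F)$: each $l \in \S_L(C)$ satisfies $\varphi(l)F_i[l] = 1$ for all $i$, so by continuity $\varphi(l)F[l] = 1 = \min_L(F)$, i.e.\ $l \in \S_L(F)$. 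Setting $C' := \Cl_L(F)$, we get $C \preceq C'$ and $F \in C'$, which is the desired inclusion.

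\emph{Main obstacle.} The only delicate point is the continuity/lower-semicontinuity of $\min_L$ and the finiteness control on $\S_L(F_i)$ along the sequence: a priori the sets $\S_L(F_i)$ could grow, and one must rule out that $\min_L(F) < 1$. I would handle this by observing that, by the remark preceding the cell-decomposition definition, all relevant short vectors lie in a region $\{0 \neq l \in L \mid F[l] \leq \min_L(F)/\min_{0\neq y}\varphi(y)\}$, and since the $F_i$ converge to $F \in \P$ one can pick a common compact neighborhood in $\P$ on which these bounding regions are uniformly contained in a single finite set of lattice vectors; over that finite set $\min_L$ is literally a minimum of finitely many continuous functions, hence continuous at $F$, giving $\min_L(F) = 1$. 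Everything else is the convexity identity already proved and straightforward bookkeeping with the partial order $\preceq$.
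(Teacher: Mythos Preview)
Your proof is correct and follows essentially the same two-inclusion argument as the paper: for $\bigcup_{C'\succeq C} C' \subseteq \overline{C}$ you use the convex segment $F_\lambda$ exactly as the paper does, and for $\overline{C} \subseteq \bigcup_{C'\succeq C} C'$ you pass to a limit and observe $\S_L(C)\subseteq \S_L(F)$ by continuity, which is again the paper's argument.

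Two small remarks. First, your appeal to Soul\'e's result to conclude $F\in\WRF$ is spurious: being a deformation retract does not imply being closed. However this is harmless, since the closure is explicitly taken \emph{in} $\WRF$, so $F\in\WRF$ is automatic. (The paper, by contrast, works with limits in the ambient space and invokes the admissibility Lemma~\ref{admissiable}: if $F$ were on the boundary $\overline{\P}-\P$ then $\min_L(F_i)\to 0$, contradicting $\min_L(F_i)=1$. This gives the slightly stronger statement that the union is closed in $\P$, not just in $\WRF$.) Second, your ``Main obstacle'' paragraph spelling out why $\min_L$ is continuous at $F$ is more careful than the paper's one-line assertion, and is the right justification.
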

\begin{proof} Let $(F_i)_i \subset C$ be a sequence converging to $F$. Since taking the $L$-minimum is 
continuous, $F$ will again have $L$-minimum $1$ and attain this value on the elements of $\S_L(C)$. Hence by 
Lemma \ref{admissiable} we have $F \in \P$ and $\S_L(C) \subset \S_L(F)$ so $F$ lies in the above union. This shows 
that $\bigcup_{C' \succeq C} C'$ is in fact a closed set.

On the other hand let $F'$ be in said union and $F 
\in C$. We set $F_\lambda:=(1-\lambda)F+\lambda F'$ and see that $F_\lambda \in C$ for all $\lambda \in 
\left[0,1\right)$ since
\begin{equation}
 \varphi(l)F_\lambda\left[l \right] =\varphi(l)(1-\lambda) F\left[l\right] +\varphi(l)\lambda F'\left[l\right] 
\geq 
(1-\lambda)+\lambda \varphi(l) F'\left[l\right] \geq 1
\end{equation}
with equality if and only if $l \in \S_L(F)=\S_L(C)$. Hence there is a sequence of elements of $C$ converging 
to $F'$ and the assertion holds. \qed \end{proof}

\begin{lemma}
 Let $C \subset \WRF$ be a minimal class. Then the topological closure of $C$ contains only finitely many 
perfect forms.
\end{lemma}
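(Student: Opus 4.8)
The plan is to rephrase the statement in terms of minimal classes and then combine the finiteness of $\GL(L)$-orbits of perfect classes with the well-roundedness of $C$.

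First I would pass from forms to classes. By the previous lemma $\overline{C}=\bigcup_{C'\succeq C}C'$, and a perfect form $F'$ lying in some $C'$ from this union satisfies $C'=\Cl_L(F')=\{aF'\mid a\in\MR_{>0}\}$, which meets $\WRF$ in the single point $F'/\mathrm{min}_L(F')$. Hence the perfect forms contained in $\overline{C}$ are in bijection with the perfect minimal classes $C'$ with $C'\succeq C$, i.e.\ with $\S_L(C)\subseteq\S_L(C')$, and it suffices to bound the number of such $C'$. Observe that each of them is automatically well-rounded, since $\S_L(C')\supseteq\S_L(C)$ and $\S_L(C)$ already contains a $D$-basis of $V$; in particular $C'\subset\WRF$.

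Now recall from the Corollary above that, up to homothety and the action of $\GL(L)$, there are only finitely many perfect forms, hence only finitely many $\GL(L)$-orbits of perfect minimal classes; fix orbit representatives $P_1,\dots,P_k$ and note that each $\S_L(P_j)$ is a finite subset of $L$. Every perfect class $C'\succeq C$ has the form $g\cdot P_j$ for some $j$ and some $g\in\GL(L)$. Since the action of $\GL(L)$ on $\P$ (and hence on minimal classes and their shortest-vector sets) is induced by its $D$-linear action on $V=D^n$, which is faithful because $\GL(L)=\Lambda^*\subseteq\GL_D(V)$, the condition $g\cdot P_j\succeq C$ translates into $\S_L(C)\subseteq\S_L(g\cdot P_j)=g\cdot\S_L(P_j)$, i.e.\ $g^{-1}\cdot\S_L(C)\subseteq\S_L(P_j)$. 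Fixing a $D$-basis $e_1,\dots,e_n$ of $V$ inside $\S_L(C)$ — available because $C$ is well-rounded — we see that $g^{-1}$ carries each $e_i$ into the finite set $\S_L(P_j)$; as a $D$-linear endomorphism of $V$ is determined by its values on a basis, there are at most $|\S_L(P_j)|^{\,n}$ possibilities for $g^{-1}$, hence for $g$, hence for the class $g\cdot P_j$. Summing over $j$ bounds the number of perfect classes above $C$, and therefore the number of perfect forms in $\overline{C}$.

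I expect the only delicate point to be making precise the compatibility between the action $g\mapsto gFg^\dagger$ on $\P$, the induced action on minimal classes and their shortest-vector sets, and the linear action of $\GL(L)$ on $V$ — i.e.\ checking that ``$g\cdot P_j\succeq C$'' really does pin down the image of a fixed $D$-basis under $g^{-1}$ inside a fixed finite set. Everything else is soft: $\S_L(P_j)$ is finite, $D$-linear maps are rigid on a basis, and there are finitely many orbits of perfect classes. One cannot shortcut this via a size bound on the perfect forms themselves, since knowing a positive definite form on the $n$ basis vectors in $\S_L(C)$ does not bound it on all of $V_\MR$ once $D\neq\MQ$, so the orbit-counting detour seems genuinely needed.
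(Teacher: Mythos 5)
Your argument is correct and follows essentially the same route as the paper: reduce to finitely many $\GL(L)$-orbit representatives of perfect forms (or classes), translate the condition "perfect form lies in $\overline{C}$" into a containment $g^\dagger\S_L(C)\subseteq\S_L(F)$ of finite shortest-vector sets, and use that $\S_L(C)$ contains a $D$-basis of $V$ to pin down $g$ among finitely many possibilities. The "delicate point" you flag about the induced action on shortest-vector sets is handled in the paper by the chain of equivalences $gFg^\dagger\in\overline{C}\Leftrightarrow\S_L(F)\supseteq g^\dagger\S_L(C)$, exactly as you anticipate.
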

\begin{proof} Since we already know that there are only finitely many perfect forms up to the action of 
$\GL(L)$ it suffices to show that for a given perfect form $F$ there are only finitely many $g \in \GL(L)$ with 
$gFg^\dagger \in \overline{C}$. Now the following holds
\begin{equation}
 gFg^\dagger \in \overline{C} \Leftrightarrow \S_L(gFg^\dagger) \supset \S_L(C) \Leftrightarrow g^{-\dagger} 
\S_L(F) \supset \S_L(C) \Leftrightarrow \S_L(F) \supset g^\dagger \S_L(C).
\end{equation}
Now $\S_L(C)$ contains a basis for $V$ and thus $g^\dagger$ is uniquely determined by its values on $\S_L(C)$. 
Since $\S_L(C)$ and $\S_L(F)$ are finite there are only finitely many maps from $\S_L(C)$ to $\S_L(F)$ and 
therefore in particular only finitely many $g \in \GL(L)$ fulfilling the condition. \qed \end{proof}

Note that \cite[Prop. (1.6)]{Opgenorth} implies that there always is a perfect form in the closure of a 
minimal class.

\begin{definition}
Let $C\subset \WRF$ be a minimal class and $N:=\dim_\MR(\Sigma)$.
\begin{enumerate}
 \item The dimension $\dim_\MR(\langle xx^\dagger ~|~ x \in \S_L(C) \rangle)$ is called the perfection rank of 
$C$, the codimension $N-\dim_\MR(\langle xx^\dagger ~|~ x \in \S_L(C) \rangle)$ the perfection corank 
of $C$.
\item The affine space generated by $C$ will be denoted by $\Aff(C)$.
\end{enumerate}

\end{definition}

\begin{lemma}
 Let $C\subset \WRF$ be a minimal class. 
Then $C$ is open in $\Aff(C)$. 
\end{lemma}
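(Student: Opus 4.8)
The plan is to realise $C$ as the locus inside $\Aff(C)$ where one open condition and a family of strict inequalities hold, and then to show that near any fixed point of $C$ only finitely many of those inequalities are actually binding. First I would observe that every $H\in C$ has $\mathrm{min}_L(H)=1$, attained exactly on $\S_L(C)$; hence $C$ — and therefore its affine hull $\Aff(C)$ — is contained in the affine subspace $W:=\{H\in\Sigma\mid \varphi(l)\,H[l]=1 \text{ for all } l\in\S_L(C)\}$, since $W$ is affine and contains $C$. For $H\in\Aff(C)$ the equalities indexed by $\S_L(C)$ therefore come for free, and one checks directly that such an $H$ lies in $C$ precisely when $H\in\P$ and $\varphi(l)\,H[l]>1$ for every $0\neq l\in L\setminus\S_L(C)$. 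As $\P$ is open in $\Sigma$, the membership $H\in\P$ is harmless; the delicate point is the infinitude of strict inequalities.

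To handle that, I would fix $F\in C$ and choose a closed ball $\bar B$ about $F$ (in the norm coming from $\langle\,,\rangle$) small enough to lie inside $\P$. Using that $F$ is positive definite there is a constant $c>0$ with $F[x]\geq c\,\lVert xx^\dagger\rVert$ for all $x\in V_\MR$: by homogeneity it suffices to verify this on a sphere in $V_\MR$, where both sides are continuous and strictly positive. Shrinking the radius of $\bar B$ below $c/2$ then yields, by a one-line estimate via Cauchy--Schwarz, the uniform bound $H[l]\geq\tfrac12 F[l]$ for all $H\in\bar B$ and all $0\neq l\in L$. Now the set $S:=\{0\neq l\in L\mid \varphi(l)\,F[l]\leq 2\}$ is finite (it sits inside a set of shortest vectors of a $\MZ$-lattice, by the same argument that shows $\S_L(F)$ is finite) and contains $\S_L(C)$; and for $l\in L\setminus S$ and $H\in\bar B$ we automatically get $\varphi(l)\,H[l]\geq\tfrac12\varphi(l)\,F[l]>1$. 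Thus only the finitely many $l\in S\setminus\S_L(C)$ can violate the required strict inequality on $\bar B$.

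Finally, for each of those finitely many $l$ we have $\varphi(l)\,F[l]>1$ because $F\in C$, and $H\mapsto\varphi(l)\,H[l]$ is continuous, so intersecting the corresponding open half-spaces with the relative interior of $\bar B\cap\Aff(C)$ produces an open neighbourhood $U$ of $F$ in $\Aff(C)$ on which $\varphi(l)\,H[l]>1$ for all $l\in S\setminus\S_L(C)$. Every $H\in U$ then lies in $\P$, satisfies $\varphi(l)\,H[l]=1$ for $l\in\S_L(C)$ and $\varphi(l)\,H[l]>1$ for all other $0\neq l\in L$, hence $\mathrm{min}_L(H)=1$ and $\S_L(H)=\S_L(C)$, i.e.\ $U\subseteq C$; this gives that $C$ is open in $\Aff(C)$. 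The one real obstacle is exactly the reduction from infinitely many to finitely many active inequalities, and it is entirely resolved by the uniform lower bound $H[l]\geq\tfrac12 F[l]$ valid on a neighbourhood of $F$, which converts the question into the finite combinatorics already supplied by the discreteness/local finiteness recorded earlier.
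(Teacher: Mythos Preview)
Your argument is correct. The reduction of the infinitely many strict inequalities to finitely many active ones via the uniform comparison $H[l]\geq\tfrac12 F[l]$ on a small ball around $F$ is sound: the bound $F[x]\geq c\,\lVert xx^\dagger\rVert$ follows by compactness exactly as you say, the Cauchy--Schwarz estimate then gives the comparison, and the finiteness of $S$ is the same short-vector argument already used in the paper. The characterisation of $C$ inside $\Aff(C)$ as $\{H\in\P\mid \varphi(l)H[l]>1\text{ for }l\notin\S_L(C)\}$ is also fine, since $\Aff(C)\subset W$ by affinity of $W$.

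The paper, however, does not argue this way at all: it simply observes that Lemma~\ref{admissiable} verifies the admissibility hypothesis needed for \cite[Lemma~(1.3)]{Opgenorth}, and invokes that result directly. So your proof is genuinely different in that it is self-contained rather than deferring to Opgenorth's general framework. The trade-off is clear: the paper's one-line citation keeps the exposition short and situates the lemma within the dual-cone machinery that is used elsewhere anyway, while your argument spells out explicitly why positive-definiteness of $F$ tames the infinite family of constraints and would stand on its own without the reader consulting \cite{Opgenorth}. Both ultimately rest on the same phenomenon---local finiteness of the short-vector conditions near a positive definite point---but your route makes that mechanism visible rather than encapsulating it in the admissibility notion.
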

\begin{proof} This a direct consequence of \citep[Lemma (1.3)]{Opgenorth} which is applicable because of 
Lemma \ref{admissiable}. \qed \end{proof}
The following lemma shows that the perfection corank is actually a feasible way to determine the dimension of a 
minimal class.

\begin{lemma}
 Let $C \subset \WRF$ be a minimal class. Then $\Aff(C)=\{F \in \Sigma ~|~ 
\varphi(l)F\left[l\right]=1~\forall~l \in \S_L(C)\}$. Moreover the (affine) dimension of $\Aff(C)$ is precisely 
the perfection corank of $C$.
\end{lemma}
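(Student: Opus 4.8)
The plan is to establish the two claimed equalities separately, exploiting the convexity results already proved. Write $H:=\{F \in \Sigma ~|~ \varphi(l)F[l]=1~\forall~l \in \S_L(C)\}$; this is an affine subspace of $\Sigma$ (an intersection of affine hyperplanes, using that $F \mapsto F[l]=\langle F, ll^\dagger\rangle$ is linear). First I would check the inclusion $\Aff(C) \subseteq H$. Every $F \in C$ satisfies $\varphi(l)F[l]=\mathrm{min}_L(F)=1$ for all $l \in \S_L(C)$ by definition of $\S_L(C)$, so $C \subseteq H$; since $H$ is affine and $\Aff(C)$ is the smallest affine subspace containing $C$, we get $\Aff(C) \subseteq H$.

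For the reverse inclusion $H \subseteq \Aff(C)$, I would use that $C$ is open in $\Aff(C)$ (the preceding lemma, itself a consequence of \citep[Lemma (1.3)]{Opgenorth} via Lemma \ref{admissiable}). Fix a basepoint $F_0 \in C$. Given any $F \in H$, consider the segment $F_\lambda:=(1-\lambda)F_0 + \lambda F$. For every $l \in \S_L(C)$ we have $\varphi(l)F_\lambda[l]=(1-\lambda)\cdot 1+\lambda\cdot 1=1$, so $F_\lambda \in H$ for all $\lambda$. I claim $F_\lambda \in \P$ for $\lambda$ small: indeed $\P$ is open in $\Sigma$, so it is open in $H$, hence $F_\lambda \in \P \cap H$ for $\lambda \in [0,\epsilon)$. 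For such $F_\lambda$ we then have $\mathrm{min}_L(F_\lambda)\le \varphi(l)F_\lambda[l]=1$ for $l\in\S_L(C)$, and we must promote this to $\S_L(F_\lambda)=\S_L(C)$, i.e. $\mathrm{min}_L(F_\lambda)=1$ with the minimum attained exactly on $\S_L(C)$. This is the point I expect to be the main obstacle: a priori shrinking $F_0$ toward an arbitrary $F \in H$ could create new short vectors or lower the minimum below $1$. The natural fix is to invoke the openness of $C$ in $\Aff(C)$ together with the fact (already used in the proof that $\overline{C}=\bigcup_{C'\succeq C}C'$) that the $L$-minimum is continuous and that only finitely many lattice vectors $l$ with $F_0[l]$ bounded can compete; by continuity, for $\lambda$ sufficiently small none of the finitely many vectors outside $\S_L(C)$ that could become short for some $F_\lambda$ actually drops to value $1$, so $\mathrm{min}_L(F_\lambda)=1$ and $\S_L(F_\lambda)=\S_L(C)$, i.e. $F_\lambda \in C$. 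Hence $F \in \Aff(C)$, giving $H\subseteq\Aff(C)$ and thus $\Aff(C)=H$.

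For the dimension statement, once $\Aff(C)=H$ is known, I would translate $H$ to the origin: $H - F_0 = \{G \in \Sigma ~|~ \langle G, ll^\dagger\rangle = 0~\forall~l \in \S_L(C)\}$, the orthogonal complement in $\Sigma$ (with respect to the positive definite form $\langle\cdot,\cdot\rangle$) of the subspace $\langle ll^\dagger ~|~ l \in \S_L(C)\rangle$. Since $\langle\cdot,\cdot\rangle$ is nondegenerate on $\Sigma$, this orthogonal complement has dimension $N - \dim_\MR(\langle xx^\dagger ~|~ x \in \S_L(C)\rangle)$, which is precisely the perfection corank of $C$ by definition. Therefore $\dim_\MR \Aff(C) = \dim_\MR(H-F_0)$ equals the perfection corank, as claimed. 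The only subtlety here is that $\S_L(C)$ is nonempty and in fact contains a $D$-basis of $V$ (as $C$ is well-rounded), so $\langle ll^\dagger~|~l\in\S_L(C)\rangle$ is a genuine nonzero subspace and $F_0$ lies in the affine, not linear, span; this is automatic from our standing assumptions.
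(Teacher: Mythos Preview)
Your argument is correct and matches the paper's approach: both establish $C \subseteq H$ trivially and then show that small perturbations of a fixed $F_0 \in C$ along any direction in $H$ remain in $C$ (the paper does this by choosing a basis $m_1,\dots,m_r$ of the translation space of $H$ and rescaling so that $F_0+m_i \in C$, you by moving along segments toward arbitrary $F \in H$; the dimension count via the orthogonal complement is identical). One caveat: invoking the openness of $C$ in $\Aff(C)$ at that stage would be circular, since $\Aff(C)=H$ is precisely what you are proving; the finiteness/continuity argument you give immediately afterward is the correct mechanism, and it is exactly what the paper's rescaling step implicitly relies on as well.
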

\begin{proof} We set $M:=\{F \in \Sigma ~|~ \varphi(l)F\left[l\right]=1~\forall~l 
\in \S_L(C)\}$. It is easy to see that $\varphi(l)\langle F, ll^\dagger \rangle=1$ is an affine condition 
on $F$ and therefore $\dim_\MR(M)$ is in fact the perfection corank of $C$. Obviously $C \subset M$ and 
therefore $\Aff(C) \subset M$ holds. To see the converse let $F \in C$ and let $m_1,...,m_r$ be a basis for the 
space of translations of $M$. We may rescale $m_1,...,m_r$ such that $F+m_i \in \P$ for all $i$. Clearly 
$(F+m_i)\left[l\right]=1$ for all $l \in \S_L(C)$ and after rescaling the $m_i$ again we may also assume that 
$(F+m_i)\left[l\right] >1$ for all $0 \neq l \in L-\S_L(C)$. But then $F+m_i \in C$ for all $1\leq i \leq r$ 
and $C$ contains an affine basis of $M$. \qed \end{proof}

The following corollary is now easy to see.
\begin{corollary}
 The perfection rank is a strictly increasing function on the set of minimal classes with the 
partial ordering defined above.
\end{corollary}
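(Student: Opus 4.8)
The plan is to prove two things: that the perfection rank is weakly monotone along $\preceq$, which is immediate from the definitions, and that it is in fact strictly monotone, which is where the geometry developed above enters.

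First I would record monotonicity. If $C\preceq C'$ then $\S_L(C)\subseteq\S_L(C')$, hence $\langle xx^\dagger~|~x\in\S_L(C)\rangle\subseteq\langle xx^\dagger~|~x\in\S_L(C')\rangle$ and the perfection rank of $C$ is at most that of $C'$. Equivalently, using the previous lemma, the perfection corank $\dim_\MR\Aff(\cdot)$ is weakly decreasing; moreover the explicit description gives directly the inclusion $\Aff(C')=\{F\in\Sigma~|~\varphi(l)F[l]=1~\forall l\in\S_L(C')\}\subseteq\{F\in\Sigma~|~\varphi(l)F[l]=1~\forall l\in\S_L(C)\}=\Aff(C)$, which I will need below.

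For strictness, suppose $C\prec C'$, i.e. $C\preceq C'$ and $C\neq C'$; since a minimal class is by definition determined by its set of shortest vectors, $C\neq C'$ forces $\S_L(C)\subsetneq\S_L(C')$. Assume for contradiction that the perfection ranks of $C$ and $C'$ coincide. Then their coranks coincide, so $\dim_\MR\Aff(C)=\dim_\MR\Aff(C')$; combined with $\Aff(C')\subseteq\Aff(C)$ and finite-dimensionality this forces $\Aff(C')=\Aff(C)=:E$. Now $C$ is open in $E$ by the openness lemma, and since $E$ is an affine subspace of $\Sigma$ we have $\overline{C}\subseteq E$; hence $\overline{C}\setminus C$ is exactly the topological boundary of $C$ in $E$ and therefore has empty interior in $E$. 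On the other hand, by the closure formula $\overline{C}=\bigcup_{C''\succeq C}C''$ we have $C'\subseteq\overline{C}$ (as $C'\succeq C$), while $C'$ is disjoint from $C$ and open in $\Aff(C')=E$. Thus $C'$ is a nonempty open subset of $E$ contained in $\overline{C}\setminus C$, a contradiction. Hence the perfection ranks of $C$ and $C'$ differ, and together with monotonicity this shows the perfection rank is strictly increasing on the poset of minimal classes.

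I expect the only delicate step to be the passage, inside the strictness argument, from equal perfection corank to equal affine hull $\Aff(C)=\Aff(C')$: this is exactly where the explicit description $\Aff(C)=\{F\in\Sigma~|~\varphi(l)F[l]=1~\forall l\in\S_L(C)\}$, the openness of $C$ in $\Aff(C)$, and the closure formula $\overline{C}=\bigcup_{C''\succeq C}C''$ are all used. Once these are in hand, the remainder is soft point-set topology (the boundary of an open subset of an affine space has empty interior).
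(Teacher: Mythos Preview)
Your proof is correct, but it takes a longer route than the paper intends: the paper records this as a corollary that is ``easy to see'' from the preceding lemma, and indeed there is a one-line argument that bypasses both the openness lemma and the closure formula. Namely, if $C\prec C'$, pick any $F\in C$ (normalized to minimum $1$). Then $F\in\Aff(C)$ trivially, but since $\S_L(C)\subsetneq\S_L(C')$ there is some $l\in\S_L(C')\setminus\S_L(C)=\S_L(C')\setminus\S_L(F)$, and for this $l$ we have $\varphi(l)F[l]>1$; hence $F\notin\Aff(C')$. This shows $\Aff(C')\subsetneq\Aff(C)$ directly, so $\dim_\MR\Aff(C')<\dim_\MR\Aff(C)$ and the perfection rank strictly increases.

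Your argument instead assumes equal rank, deduces $\Aff(C)=\Aff(C')$, and derives a contradiction from the fact that $C'$ would then be a nonempty open subset of $\Aff(C)$ sitting inside the topological boundary $\overline{C}\setminus C$. This is sound, and it is a nice illustration of how the openness and closure lemmas interact; but the direct argument above is what makes the statement an immediate corollary, requiring only the explicit description $\Aff(C)=\{F\in\Sigma\mid\varphi(l)F[l]=1~\forall\,l\in\S_L(C)\}$ and nothing else.
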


\begin{remark}
 Let $C \subset \WRF$ be a minimal class. Then $C$ is bounded (as a subset of $\Sigma$).
\end{remark}

The last two assertions have prepared us to prove the most important of the structural properties of our cell 
decomposition.
\begin{theorem}
 Let $C \subset \WRF$ be a minimal class. Then its closure is the convex hull of the perfect forms it contains.
\end{theorem}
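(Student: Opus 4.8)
The plan is to prove the two inclusions separately, using the structural results already established. Write $\overline{C}$ for the closure of $C$ in $\WRF$ and let $P(C)$ denote the (finite) set of perfect forms contained in $\overline{C}$; we already know $P(C)$ is nonempty and finite by the preceding lemmas. Since $\overline{C}$ is closed and convex (it is the closure of a convex set, by the convexity lemma for minimal classes), it certainly contains the convex hull $\mathrm{conv}(P(C))$, giving one inclusion for free.

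For the reverse inclusion $\overline{C} \subseteq \mathrm{conv}(P(C))$, I would argue by induction on the perfection rank (equivalently, by the partial ordering $\preceq$), using that the perfection rank is strictly increasing and bounded above by $N = \dim_\MR(\Sigma)$. The base case is a perfect class, where $\overline{C} = C = \{aF\}$ is a single ray and the statement is immediate after normalizing to $\WRF$. For the inductive step, take $F \in \overline{C}$. If $F$ is perfect we are done, so assume $F \in C'$ for some $C' \succeq C$ with $C'$ not perfect, hence of perfection corank $k \geq 1$; note $\overline{C'} \subseteq \overline{C}$, so it suffices to show $\overline{C'} \subseteq \mathrm{conv}(P(C))$. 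Using that $C'$ is open in $\Aff(C')$ and bounded, $\overline{C'}$ is a compact convex body of dimension $k$ inside the affine space $\Aff(C')$; its boundary (relative to $\Aff(C')$) is covered by the closures of the classes $C'' \succ C'$ in its closure, each of strictly larger perfection rank, hence by induction contained in $\mathrm{conv}(P(C))$. Any interior point of $\overline{C'}$ lies on a segment with both endpoints on this relative boundary — draw a line through the point inside the $k$-dimensional affine space and take the two points where it exits the compact body — so it too lies in $\mathrm{conv}(P(C))$, completing the induction.

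The key geometric input I would lean on is that the boundary of $\overline{C'}$ relative to $\Aff(C')$ is exactly $\bigcup_{C'' \succ C'} C''$ intersected with $\WRF$: the closure lemma gives $\overline{C'} = \bigcup_{C'' \succeq C'} C''$, and since $C'$ is open in $\Aff(C')$ the points of $C'$ itself are relative-interior points, so the relative boundary is precisely the union over the strictly larger classes. One must also check these boundary classes are still well-rounded (so the induction stays within our framework and their closures contain perfect forms); this follows since $\S_L(C'') \supseteq \S_L(C')$ and $\S_L(C')$ already contains a $D$-basis of $V$.

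The main obstacle I anticipate is the induction's reliance on having \emph{enough} larger classes to cover the relative boundary — i.e. that every proper face of the compact convex body $\overline{C'}$ is hit by some $C'' \succ C'$, rather than the decomposition leaving gaps. This is really the content of the structure theory: one needs that near any point $F \in \overline{C'}$ the set $\S_L$ is locally constant on a relatively open piece and jumps only on a locally finite stratification, which is exactly what Lemma \ref{admissiable} buys via the Opgenorth machinery (local finiteness of the minimal-class decomposition on $\WRF$, \citep[Lemma (1.3)]{Opgenorth}). Once that local finiteness is in hand, the covering of the relative boundary by larger classes is automatic and the convexity argument closes everything up.
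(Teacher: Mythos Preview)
Your proposal is correct and follows essentially the same approach as the paper: induction on perfection corank, using that $C$ is bounded and open in $\Aff(C)$ to express any $F\in C$ as a convex combination of points on the relative boundary $\overline{C}\setminus C=\bigcup_{C''\succneqq C}C''$, where the induction hypothesis applies. The paper's version is marginally slicker in that it shoots a single ray from a fixed perfect form $F'\in\overline{C}$ through $F$ to hit the boundary at one point $F''$ (so one endpoint is already perfect), rather than using an arbitrary line with two boundary endpoints, but the idea is the same and your worry about ``gaps'' in the boundary covering is already dispatched by the closure lemma $\overline{C}=\bigcup_{C'\succeq C}C'$ together with openness of $C$ in $\Aff(C)$.
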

\begin{proof} We will prove this via induction on the perfection corank. If $C$ is the class corresponding to 
a perfect form there is nothing to show. If $C$ is not the class of a perfect form let $F \in C$ and let $F' 
\in \overline{C}$ be a perfect form. Now we know that $C$ is bounded, hence there is some $\rho>0$ such that 
$F'':=F+\rho(F-F')$ is in the boundary of $C$. Since $C$ was open in $\Aff(C)$ we have $\Cl_L(F'') \succneqq C$. 
By induction $F''$ is a convex combination of perfect forms in the closure of $C$. Now $F$ is a convex 
combination of $F'$ and $F''$ which implies the assertion. \qed \end{proof}

This theorem has some computationally very useful consequences.
\begin{corollary}
 Up to the action of $\GL(L)$ there are only finitely many well-rounded minimal classes.
\end{corollary}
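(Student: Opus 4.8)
The plan is to combine the finiteness of perfect forms up to $\GL(L)$ with the combinatorial description of class closures, so that (as in the Theorem above) everything reduces to a counting argument over finitely many shortest-vector sets. Two preliminary observations are needed. First, by the preceding Corollary there are only finitely many perfect forms in $\WRF$ up to the action of $\GL(L)$: each positive-homothety class of perfect forms meets $\WRF$ in exactly one point, and $\GL(L)$ preserves $\WRF$. Second, the closure of every well-rounded minimal class contains at least one perfect form, as noted above via \cite[Prop. (1.6)]{Opgenorth}.

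Fix representatives $F_1,\dots,F_m\in\WRF$ of the finitely many $\GL(L)$-orbits of perfect forms. Since $\GL(L)$ acts on $\WRF$ by homeomorphisms, it commutes with forming topological closures of minimal classes. Hence, given an arbitrary well-rounded minimal class $C$, I would pick a perfect form in $\overline{C}$, write it as $gF_ig^\dagger$ for a suitable $g\in\GL(L)$ and index $i$, and replace $C$ by its image under $g^{-1}$; the resulting class lies in the same $\GL(L)$-orbit as $C$, and its closure contains $F_i$. Thus every $\GL(L)$-orbit of well-rounded minimal classes has a representative whose closure contains one of $F_1,\dots,F_m$.

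It then remains to bound, for each fixed $i$, the number of well-rounded minimal classes $C$ with $F_i\in\overline{C}$. For such a $C$, the Lemma identifying $\overline{C}$ with $\bigcup_{C'\succeq C}C'$ shows $\Cl_L(F_i)\succeq C$, i.e.\ $\S_L(C)\subseteq\S_L(F_i)$. Since $\S_L(F_i)$ is finite it has only finitely many subsets, and a minimal class is by definition completely determined by its set of shortest vectors; hence only finitely many $C$ can occur for each $i$. Combining this with the reduction of the previous paragraph yields the claim.

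None of the steps is expected to present a genuine obstacle; the argument is essentially bookkeeping built on the Theorem above and on Ash's compactness result underlying the finiteness of perfect forms. The only points deserving a word of care are the $\GL(L)$-equivariance of the closure operation and the existence of a perfect form in every $\overline{C}$, both already available. (If one prefers to invoke the Theorem directly, one can instead note that $\overline{C}$ equals the convex hull of the finite set of perfect forms it contains, so that $C$, being open in $\Aff(C)$, is recovered as the relative interior of $\overline{C}$ and is thereby determined by that finite set; but the subset argument above is shorter.)
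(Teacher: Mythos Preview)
Your argument is correct. The paper states the corollary without proof immediately after the Theorem that $\overline{C}$ is the convex hull of the perfect forms it contains, so the intended route is the one you sketch in your parenthetical remark: $C$ is recovered as the relative interior of $\overline{C}$, which in turn is determined by a finite set of perfect forms, and one then reduces modulo $\GL(L)$ to configurations containing a fixed representative $F_i$.

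Your main argument is a mild simplification of this: you bypass the convex-hull Theorem entirely and use only the earlier Lemma $\overline{C}=\bigcup_{C'\succeq C}C'$ together with the existence of a perfect form in $\overline{C}$, concluding via $\S_L(C)\subseteq\S_L(F_i)$. This is slightly more elementary (it does not require knowing that the closure is a polytope), while the paper's placement buys a more geometric picture of why the finiteness holds. Either way the bookkeeping is the same, and both are valid.
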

\begin{corollary}
 If $C\subset \WRF$ is a minimal class there are only finitely many minimal classes contained in $\overline{C}$.
\end{corollary}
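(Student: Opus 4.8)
The plan is to reduce the claim to the finiteness of the set of perfect forms contained in $\overline{C}$, which we have already established, by exploiting the fact that a minimal class is uniquely determined by its set of shortest vectors.

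First I would fix an arbitrary minimal class $D$ with $D\subseteq\overline{C}$. Since $\overline{C}$ is closed, this immediately gives $\overline{D}\subseteq\overline{C}$. By the remark following the lemma that $\overline{C}$ contains only finitely many perfect forms (the existence statement being an application of \cite[Prop.~(1.6)]{Opgenorth}) the closure $\overline{D}$ contains at least one perfect form $F'$, and the inclusion $\overline{D}\subseteq\overline{C}$ then forces $F'$ to be one of the finitely many perfect forms lying in $\overline{C}$.

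Next, by the lemma computing closures of minimal classes, $F'\in\overline{D}$ is equivalent to $D\preceq\Cl_L(F')$, that is, $\S_L(D)\subseteq\S_L(F')$. Hence $\S_L(D)$ is a subset of one of the finitely many finite sets $\S_L(F')$, $F'$ ranging over the (finitely many) perfect forms in $\overline{C}$; consequently $\S_L(D)$ can take only finitely many values as $D$ ranges over the minimal classes contained in $\overline{C}$. Since by definition a minimal class is determined by its set of shortest vectors, the assignment $D\mapsto\S_L(D)$ is injective on this set of classes, and we conclude that there are only finitely many such $D$.

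The argument is essentially bookkeeping once the structural input is in place (closures are convex hulls of their perfect forms, $\overline{C}$ contains only finitely many perfect forms, and every closure of a minimal class contains a perfect form); the only step requiring a little care is the observation that the perfect forms occurring in $\overline{D}$ are, via $\overline{D}\subseteq\overline{C}$, forced to lie among the finitely many perfect forms of $\overline{C}$. I do not anticipate a genuine obstacle here.
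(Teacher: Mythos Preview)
Your argument is correct. The paper states this result as a corollary of the preceding theorem (that $\overline{C}$ is the convex hull of the finitely many perfect forms it contains) without giving a proof; the intended deduction is presumably that a convex hull of finitely many points is a polytope with only finitely many faces, and the minimal classes contained in $\overline{C}$ are precisely these faces (as the next corollary makes explicit).

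Your route is slightly different and in fact more economical: although you list the convex-hull theorem among the ``structural input'', your actual reasoning never invokes it. You only use (i) that $\overline{C}$ contains finitely many perfect forms, (ii) that every closure of a minimal class contains at least one perfect form, and (iii) the description $\overline{D}=\bigcup_{D'\succeq D}D'$, together with the fact that a minimal class is determined by its set of shortest vectors. All of these ingredients are established \emph{before} the convex-hull theorem, so your argument shows the corollary could have been proved earlier, independently of that theorem. The paper's ordering trades this small logical economy for the conceptual picture of $\overline{C}$ as a polytope, from which all three corollaries drop out at once.
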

\begin{corollary}
 Let $C \subset \WRF$ be a minimal class. $\overline{C}$ is a bounded polytope whose faces are exactly the 
minimal classes $C' \succneqq C$.
\end{corollary}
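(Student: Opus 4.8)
The plan is to build on the preceding theorem: it already says that $\overline{C}$ is the convex hull of the perfect forms it contains, of which there are only finitely many by the lemma just above that theorem; since $C$, and hence $\overline{C}$, is bounded, $\overline{C}$ is automatically a bounded polytope. It therefore only remains to identify the face lattice of this polytope with the poset of minimal classes lying above $C$. All of this takes place inside the affine span $\Aff(C)$, which contains $\overline{C}$ because $C$ is open in $\Aff(C)$ by an earlier lemma; indeed $\Aff(C)$ is then the affine hull of $\overline{C}$, and likewise $\overline{C'} \subseteq \Aff(C')$ with $\Aff(C')$ its affine hull for every minimal class $C' \succeq C$, so the relative interiors of the sets $\overline{C'}$ are unambiguous.

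The one computation I would do first is a convex-combination formula for shortest vectors: if $F_1, F_2 \in \overline{C}$ and $\lambda \in (0,1)$, then $\S_L(\lambda F_1 + (1-\lambda)F_2) = \S_L(F_1) \cap \S_L(F_2)$. This is immediate from the fact that every element of $\overline{C}$ has $L$-minimum $1$ (continuity of the $L$-minimum, exactly as in the proof that $\overline{C} = \bigcup_{C' \succeq C} C'$), so that $\varphi(l) F_i[l] \geq 1$ for all $0 \neq l \in L$, together with the linearity $\varphi(l)(\lambda F_1 + (1-\lambda)F_2)[l] = \lambda \varphi(l) F_1[l] + (1-\lambda)\varphi(l) F_2[l]$, whose value equals $1$ precisely for $l \in \S_L(F_1) \cap \S_L(F_2)$.

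Using this formula I would then prove two things. First, for each minimal class $C' \succeq C$ the set $\overline{C'} = \bigcup_{C'' \succeq C'} C''$ (this description, proved earlier, also shows $\overline{C'} \subseteq \overline{C}$) is a face of $\overline{C}$: it is convex, closed and contained in $\overline{C}$, and if $F \in \overline{C'}$ lies in the relative interior of a segment $[F_1, F_2]$ with $F_1, F_2 \in \overline{C}$, then $\S_L(F_i) \supseteq \S_L(F) \supseteq \S_L(C')$, hence $F_1, F_2 \in \overline{C'}$. Second, $C' = \mathrm{relint}(\overline{C'})$: the inclusion $C' \subseteq \mathrm{relint}(\overline{C'})$ holds because $C'$ is open in $\Aff(C')$; for the reverse, suppose $F \in \mathrm{relint}(\overline{C'})$ but $\Cl_L(F) = C'' \succneqq C'$. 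Since the perfection rank is strictly increasing along the partial order and the perfection corank of a class equals the dimension of its affine span, $\dim \Aff(C'') < \dim \Aff(C')$; as $\overline{C''} \subseteq \overline{C'}$, this makes $\overline{C''}$ a proper face of the polytope $\overline{C'}$ (that it is a face follows by the same argument, now with $\overline{C'}$ in place of $\overline{C}$), hence disjoint from $\mathrm{relint}(\overline{C'})$, which contradicts $F \in C'' \subseteq \overline{C''}$. With these two facts in hand, an arbitrary face $\mathcal{F}$ of $\overline{C}$ is handled by choosing $F$ in its relative interior and setting $C' := \Cl_L(F)$ (so $C' \succeq C$ since $F \in \overline{C}$): then $\overline{C'}$ is a face of $\overline{C}$ with $F \in C' = \mathrm{relint}(\overline{C'})$, and since each point of a polytope lies in the relative interior of a unique face, $\mathcal{F} = \overline{C'}$. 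Hence the faces of $\overline{C}$ are exactly the sets $\overline{C'}$, $C' \succeq C$; the improper face $\overline{C}$ has relative interior $C$, and the proper faces correspond bijectively to the minimal classes $C' \succneqq C$ via $C' \leftrightarrow \overline{C'}$.

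I expect the only genuinely delicate point to be the identification $C' = \mathrm{relint}(\overline{C'})$, which is precisely where the openness of $C'$ in $\Aff(C')$ and the strict monotonicity of the perfection rank are both needed; everything else is the preceding theorem plus standard facts about polytopes and their faces.
\qed
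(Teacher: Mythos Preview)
Your argument is correct. The paper itself gives no proof of this corollary at all; it is simply listed among the immediate consequences of the theorem that $\overline{C}$ is the convex hull of the (finitely many) perfect forms it contains. Your write-up supplies exactly the details one has to check to turn that sentence into an honest proof, and the route you take---the convexity formula $\S_L(\lambda F_1+(1-\lambda)F_2)=\S_L(F_1)\cap\S_L(F_2)$ on $\overline{C}$, then the face property of each $\overline{C'}$, then $C'=\mathrm{relint}(\overline{C'})$ via openness in $\Aff(C')$ and strict monotonicity of the perfection rank---is the natural and essentially forced one. One small remark: as you implicitly note at the end, the faces are literally the closed sets $\overline{C'}$, not the (relatively open) classes $C'$ themselves; the paper's phrasing is slightly informal on this point, and your bijection $C'\leftrightarrow\overline{C'}$ is the right way to read it.
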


\section{Homology computations}\label{Perturbationsection}
The previous section provided us with a finite-dimensional CW-complex together with a cellular $\GL(L)$-action. Hence the 
cellular chain complex arising from the decomposition of $\WRF$ into minimal classes is in fact a chain complex 
of $\GL(L)$-modules. We want to use this information to compute a free $\MZ [\GL(L)]$-resolution of $\MZ$ which 
may then be used for homology computations.

\subsection{Perturbations}
First note that the $\GL(L)$-modules appearing in the cellular chain complex of $\WRF$ are not free as each cell 
has a non-trivial stabilizer. Hence the cellular chain complex itself does not constitute a $\MZ [\GL(L)]$-free 
resolution of $\MZ$. However the following theorem originally due to C. T. C. Wall (\cite{Wall}) allows us to combine the 
cellular chain complex with free resolutions of $\MZ$ over the group rings of stabilizers of minimal classes to 
obtain the desired resolution. We do not worry here about finding resolutions for the stabilizers as all these groups are finite in which case there are computational methods readily available (see for example \cite{ellis2004computing}).

\begin{theorem}[\protect{\cite[Prop. 1, Prop. 4]{polytopal}}]
 Let $\{A_{p,q}~|~p,q \geq 0 \}$ be a bigraded family of $\MZ G$-free modules und $d_0: A_{p,q} \rightarrow 
A_{p,q-1}$ homomorphisms such that $(A_{p,*},d_0)$ is an acyclic chain complex for each $p$. We set 
$C_p:=\H_0(A_{p,*})$ and assume furthermore that there are homomorphisms $\partial:C_p \rightarrow C_{p-1}$ 
such that $(C_*,\partial)$ is a chain complex. Then the following holds:
\begin{enumerate}
 \item There are homomorphisms $d_k:A_{p,q} \rightarrow A_{p-k,q+k-1}$ for $k\geq 1,p>k$ such that
 \begin{equation}
 d=d_0+d_1+d_2+...: R_n:=\bigoplus_{p+q=n} A_{p,q}\rightarrow R_{n-1}=\bigoplus_{p+q=n-1} A_{p,q}
\end{equation}
is the differential of a chain complex $R_*$ of free $\MZ G$-modules.

\item The canonical chain map $\phi_p:A_{p,*} \rightarrow \H_0(A_{p,*})$ yields a chain map $\phi_*:R_* 
\rightarrow C_*$ which induces an isomorphism in homology.

\item Assume that there are $\MZ$-homomorphisms $h_0: A_{p,q} \rightarrow A_{p,q+1}$ with $d_0h_0d_0(x)=d_0(x)$ 
for all $x \in A_{p,q+1}$ (a so called contracting homotopy). Then we can construct $d_k$ by first lifting 
$\partial$ to $d_1: A_{p,0} \rightarrow A_{p-1,0}$  and setting \begin{equation} d_k=-h_0(\sum_{i=1}^k 
d_id_{k-i})\end{equation} recursively 
on the free generators of $A_{p,q}$.
\end{enumerate}
\end{theorem}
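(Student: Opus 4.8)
The plan is to follow Wall's original argument, realizing the $R_*$ as the total complex of a suitable double complex and building the higher differentials $d_k$ by an inductive lifting procedure. First I would verify that the total modules $R_n=\bigoplus_{p+q=n}A_{p,q}$ are free $\MZ G$-modules; this is immediate since each $A_{p,q}$ is free and the sum is finite in each degree (in our application $p$ is bounded by $\dim_\MR(\WRF)$). Next I would set up the bookkeeping: a putative differential $d=d_0+d_1+d_2+\dots$ on $R_*$ decomposes, on the summand $A_{p,q}$, into components landing in $A_{p-k,q+k-1}$, and the condition $d^2=0$ unravives bidegree by bidegree into the family of equations $\sum_{i=0}^{k}d_i d_{k-i}=0$ on $A_{p,q}$ for each $k\geq 0$. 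The case $k=0$ is $d_0^2=0$, given. The case $k=1$ says $d_0 d_1 + d_1 d_0 = 0$, i.e. $d_1$ is a chain map for $d_0$; we first define $d_1$ on $A_{p,0}$ by lifting $\partial\colon C_p\to C_{p-1}$ through the surjection $d_0\colon A_{p-1,1}\to Z_0(A_{p-1,*}) = \mathrm{img}$, using freeness of $A_{p,0}$ and exactness of $(A_{p-1,*},d_0)$ in positive degrees, and then extend $d_1$ to all of $A_{p,*}$ as a chain map, again using freeness and acyclicity (this is the standard "comparison theorem" for projective resolutions).

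The heart of the argument is the recursion for $k\geq 2$. Assuming $d_0,\dots,d_{k-1}$ have been constructed so that $\sum_{i=0}^{j}d_id_{j-i}=0$ for all $j<k$, I would consider the map $e_k:=\sum_{i=1}^{k-1}d_i d_{k-i}\colon A_{p,q}\to A_{p-k,q+k-2}$ and check, using the lower-order relations, that $d_0 e_k = -e_k d_0$ restricted appropriately — more precisely that $d_0 e_k + e_k d_0 = 0$ as maps into $A_{p-k,*}$, so that $e_k$ is a degree-shifting chain map between acyclic complexes and hence null-homotopic; one then defines $d_k$ so that $d_0 d_k + d_k d_0 = -e_k$, which is exactly the equation $\sum_{i=0}^{k}d_id_{k-i}=0$. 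When a contracting homotopy $h_0$ is available (part (3)), this null-homotopy is explicit: one simply sets $d_k = -h_0 e_k = -h_0\big(\sum_{i=1}^{k}d_id_{k-i}\big)$ on free generators, noting that the $i=k$ term $d_k d_0$ does not actually contribute on $q=0$ generators and that $h_0$ absorbs the rest; I would need to check carefully on which generators this formula is imposed and that it then propagates correctly, and that $d_0 h_0 e_k = e_k$ on the relevant submodule, which follows from the identity $d_0 h_0 d_0 = d_0$ together with $e_k$ taking values in images of $d_0$. Since $p>k$ is required, the recursion terminates after finitely many steps in each total degree, so $d$ is a well-defined differential.

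For part (2): the projections $\phi_p\colon A_{p,*}\to \H_0(A_{p,*})=C_p$ assemble, on $R_n=\bigoplus_{p+q=n}A_{p,q}$, into a map $\phi_n$ that kills everything with $q>0$ and is $\phi_p$ on $A_{p,0}$. One checks $\phi$ is a chain map from the construction of $d_1$ (it is designed to lift $\partial$) and the fact that all $d_k$ for $k\geq 1$ either lower $q$ or, when they don't, are composed away by $\phi$. That $\phi_*$ is a homology isomorphism is the standard spectral-sequence / filtration argument: filter $R_*$ by $p$; the associated graded is $\bigoplus_p A_{p,*}$ with differential $d_0$, whose homology is concentrated in $q=0$ and equals $C_p$ there, so the $E^1$-page is $(C_*,\partial)$ and the spectral sequence degenerates at $E^2$, identifying $\H_*(R_*)$ with $\H_*(C_*)$ compatibly with $\phi$.

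The main obstacle I anticipate is purely the verification in part (3) that the closed-form recursion $d_k = -h_0\big(\sum_{i=1}^{k}d_id_{k-i}\big)$, applied only on free generators of $A_{p,q}$ and extended $\MZ G$-linearly, genuinely satisfies the full quadratic relation $\sum_{i=0}^{k}d_id_{k-i}=0$ on all of $A_{p,q}$ — this requires simultaneously tracking the induction on $k$ and an induction on $q$ within each $A_{p,*}$, using $d_0 h_0 d_0 = d_0$ to convert "$d_0(\text{something}) = d_0(\text{something else})$" into the needed equality and carefully handling the boundary term $d_k d_0$. Since both parts of the statement are quoted from \cite{polytopal} (following \cite{Wall}), I would ultimately cite that reference for the bookkeeping and only sketch the inductive skeleton above.
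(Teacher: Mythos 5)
The paper does not actually prove this theorem: it is quoted verbatim from \cite[Prop.~1, Prop.~4]{polytopal} (going back to Wall), and no argument is given in the text. Your sketch is the standard proof of the perturbation lemma --- total complex of the bigraded family, inductive construction of $d_k$ by null-homotoping $\sum_{i=1}^{k-1}d_id_{k-i}$ (with the double induction on $k$ and $q$ making the $h_0$-formula in part (3) well-founded), and the filtration-by-$p$ spectral sequence for part (2) --- and it is correct in outline, so it is consistent with, and indeed more detailed than, the paper's treatment, which simply defers to the cited reference as you also propose to do.
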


 In our situation this reads as follows: 
 
 Let $X:=\WRF$ denote the CW-complex of well-rounded forms of minimum $1$. For $p\in \MZ_{\geq 0}$ let $e_p$ be 
a system of representatives of minimal classes of perfection corank (dimension) $p$. 
Then
\begin{equation}
 C_p(X) \cong \bigoplus_{c \in e_p} \MZ [\GL(L)] \otimes_{\MZ[\stab(c)]} \MZ^{\chi_c}
\end{equation}
where $\stab(c):=\stab_{\GL(L)}(c)$ is the stabilizer of $c$ in $\GL(L)$ acting on $\MZ$ via the character 
$\chi_c$ induced by the action of $\stab(c)$ on the orientation of $c$. Now let $R_*^c$ be a $\MZ[\stab(c)]$-free 
resolution of $\MZ^{\chi_c}$ then 
\begin{equation}
 \bigoplus_{c \in e_p} \MZ [\GL(L)] \otimes_{\MZ [\stab(c)]} R_*^c
\end{equation}
is a $\MZ [\GL(L)]$ free resolution of $C_p(X)$ and the theorem becomes applicable.

 The algorithm to compute a free resolution in this setup is part of the GAP-package ``HAP'' (\cite{HAP}) and 
needs as input the combinatorial structure of the cell complex and the finite stabilizers.

 The above algorithm cannot only be used for homology computations of the full group $\GL(L)$ but also for
subgroups of finite index in $\GL(L)$ if enough about them is known (e.g. if we can check for membership and 
know the index in $\GL(L)$). For example one can do the computations for unit groups of non-maximal orders or 
special linear groups over imaginary quadratic number fields. Note that if said subgroup is torsion free every stabilizer is trivial and the cellular chain complex is already a free resolution without first applying Wall's perturbation lemma. Furthermore all of the above still holds if we 
quotient by a subgroup acting trivially on $\P$ (in most cases this will only be $\langle -1 \rangle$).

\section{Computational results}\label{Computationalsection}
In this section we want to present some results obtained using the algorithm we described in the previous section. 
\subsection{Groups over imaginary quadratic integers}
We start with an example that is meant as a reliability check for our computations since the homology presented here was already computed by \cite{rahm} and the results presented there match ours in all considered dimensions. 

 Consider the imaginary quadratic number field $K=\MQ(\sqrt{-5})$ and the group 
$\Gamma:=\mathrm{PSL}_2(\MZ_K)$. We obtain a resolution of $\MZ$ over $\MZ[\Gamma]$ 
and compute:
\def \arraystretch{1.3}
{\large
\begin{table}[h]
\begin{center}
 \begin{tabular}{|l|l|}
  \hline
  $n$  & $\H_n \left( \Gamma,\MZ\right)$\\
\hline
\hline
$1$  & $\MZ/2\MZ \times \MZ/6\MZ \times \MZ^2$  \\
\hline 
$2$  & $\MZ/2\MZ \times \MZ/12\MZ \times \MZ$  \\
\hline
$3$  & $(\MZ/2\MZ)^2 \times \MZ/6\MZ$  \\
\hline
$4$  & $(\MZ/2\MZ)^3 \times \MZ/6\MZ$  \\
\hline 
$5$  & $(\MZ/2\MZ)^4 \times \MZ/6\MZ$  \\
\hline
$6$  & $(\MZ/2\MZ)^5 \times \MZ/6\MZ$  \\
\hline
$7$  & $(\MZ/2\MZ)^6 \times \MZ/6\MZ$  \\
\hline 
$8$  & $(\MZ/2\MZ)^7 \times \MZ/6\MZ$ \\
\hline
$9$  & $(\MZ/2\MZ)^8 \times \MZ/6\MZ$  \\
\hline
$10$  & $(\MZ/2\MZ)^9 \times \MZ/6\MZ$  \\
\hline
 \end{tabular}
\end{center}
\caption{The integral homology of $\PSL_2(\MZ_K)$ for $K=\MQ(\sqrt{-5})$}
\end{table}}

If the ring of integers of the imaginary quadratic number field has class number greater than one there are multiple conjugacy classes of maximal orders and the method described here can be used to compute a resolution for each of the unit groups. In many cases this can be used to distinguish between these groups.

Consider the imaginary quadratic number field $\MQ(\sqrt{-6})$. Then its ring of integers $\MZ_K=\MZ\left[\sqrt{-6}\right]$ has class number two and thus there are two isomorphism classes of $\MZ_K$-lattices of dimension two represented by $L_0:=\MZ_K^2$ and $L_1:=\MZ_K \oplus \fp$ where $\fp=\langle 2,\sqrt{-6}\rangle$. Consequently there are two conjugacy classes of maximal orders in $K^{2 \times 2}$ with corresponding unit groups $\GL(L_0)=\GL_2(\MZ_K)$ and $\GL(L_1)$. We compute the integral homology of these two unit groups up to degree $10$ and can for example conclude that $\GL(L_0)$ and $\GL(L_1)$ are not isomorphic as abstract groups.

\def \arraystretch{1.3}
{\large
\begin{table}[h]
\begin{center}
 \begin{tabular}{|l|l|l|}
  \hline
  $n$  & $\H_n \left( \GL(L_0),\MZ\right)$ & $\H_n \left( \GL(L_1),\MZ\right)$\\
\hline
\hline
$1$  & $(\MZ/2\MZ^4$  &$(\MZ/2\MZ)^4$\\
\hline 
$2$  &  $(\MZ/4\MZ)^2 \times \MZ/12\MZ \times \MZ$ & $(\MZ/2\MZ)^2 \times \MZ/12\MZ \times \MZ$\\
\hline
$3$  & $(\MZ/2\MZ)^9 \times \MZ/24\MZ$  &$(\MZ/2\MZ)^8 \times \MZ/24\MZ$\\
\hline
$4$  & $(\MZ/2\MZ)^7$  & $(\MZ/2\MZ)^6 \times \MZ/4\MZ$\\
\hline 
$5$  & $(\MZ/2\MZ)^{13}$ & $(\MZ/2\MZ)^{13}$\\
\hline
$6$  &  $(\MZ/2\MZ)^{8}\times (\MZ/4\MZ)^2 \times \MZ/12\MZ$ & $(\MZ/2\MZ)^{10} \times \MZ/12\MZ$\\
\hline
$7$  &  $(\MZ/2\MZ)^{17} \times \MZ/24\MZ$ & $(\MZ/2\MZ)^{16} \times \MZ/24\MZ$\\
\hline 
$8$  &  $(\MZ/2\MZ)^{15}$& $(\MZ/2\MZ)^{14} \times \MZ/4\MZ$\\
\hline
$9$  &  $(\MZ/2\MZ)^{21}$ &$(\MZ/2\MZ)^{21}$\\
\hline
$10$ &  $(\MZ/2\MZ)^{16} \times  (\MZ/4\MZ)^2\times\MZ/12\MZ$ & $(\MZ/2\MZ)^{18} \times \MZ/12\MZ$\\
\hline
 \end{tabular}
\end{center}
\caption{The integral homology of $\GL_2(\MZ_K)$ and $\GL(\MZ_K\oplus \fp)$ for $K=\MQ(\sqrt{-6})$}
\end{table}}

For linear groups over imaginary quadratic number fields in dimension higher than $2$ only little is known. The only results known to the author appeared in \cite{GunnelsCohomologyOfLinearGroups} where the authors computed the homology in dimension $3$ up to small torsion. Since the complexity of our cell complex grows rapidly with the dimension and apparently with the discriminant of the order (see also \ref{Sizes}) we are not capable of computing the homology in these cases up to a high degree, however we can compute the (so far missing) torsion in low degrees.
  
 We consider the imaginary quadratic number field $K=\MQ(\sqrt{-7})$ and the group $\Gamma:=\GL_3(\MZ_K)$. Using our method we can compute the integral homology up 
to dimension $3$ including torsion. 
\begin{equation}
 \H_n\left( \Gamma, \MZ \right)=\begin{cases}
                                 \MZ/2\MZ & n=1\\
                                 (\MZ/2\MZ)^3 & n=2\\
				  (\MZ/2\MZ)^4 \times (\MZ/4\MZ) \times (\MZ/12\MZ)^2 \times \MZ^2 & n=3
                                \end{cases}
\end{equation}
For $K=\MQ(\sqrt{-1})$ and $\Gamma=\GL_3(\MZ_K)$ we compute the homology up to dimension $2$.
\begin{equation}
 \H_n\left( \Gamma, \MZ \right)=\begin{cases}
                                 \MZ/4\MZ & n=1\\
                                 (\MZ/2\MZ)^2 & n=2
                                \end{cases}
\end{equation}
Note that these results (in particular the dimensions of the free parts) are compatible with those of \cite{GunnelsCohomologyOfLinearGroups}.

\subsection{Groups over real quadratic fields}

Consider the groups $\mathrm{GL}_2(\MZ_K)$ where $K$ is one of the real quadratic fields $\MQ(\sqrt{2})$ and $\MQ(\sqrt{3})$. The groups of this type are closely related to the so-called Hilbert modular groups. We compute the integral homology up to dimension $8$ and $7$, respectively, and obtain:
\def \arraystretch{1.3}
{\large
\begin{table}[h]
\begin{center}
 \begin{tabular}{|l|l|l|}
  \hline
  $n$  & $\H_n \left( \GL_2(\MZ[\sqrt{2}]),\MZ\right)$ & $\H_n \left( \GL_2(\MZ[\sqrt{3}]),\MZ\right)$\\
\hline
\hline
$1$  & $(\MZ/2\MZ)^2 \times \MZ$ & $(\MZ/2\MZ)^2 \times \MZ$\\
\hline 
$2$  & $(\MZ/2\MZ)^6$& $(\MZ/2\MZ)^5 \times \MZ$ \\
\hline
$3$  & $(\MZ/2\MZ)^7 \times \MZ/48\MZ$ & $(\MZ/2\MZ)^6 \times (\MZ/24\MZ)^2 \times \MZ$ \\
\hline
$4$  & $(\MZ/2\MZ)^{11} \times \MZ/24\MZ$ &$(\MZ/2\MZ)^{10} \times \MZ/12\MZ \times \MZ/24\MZ$ \\
\hline 
$5$  & $(\MZ/2\MZ)^{13} \times \MZ/4\MZ$& $(\MZ/2\MZ)^{13} \times \MZ/4\MZ$ \\
\hline
$6$  & $(\MZ/2\MZ)^{18}$ & $(\MZ/2\MZ)^{18}$\\
\hline
$7$  & $(\MZ/2\MZ)^{19} \times \MZ/48\MZ$ & $(\MZ/2\MZ)^{18} \times (\MZ/24\MZ)^2$ \\
\hline
$8$  & $(\MZ/2\MZ)^{23} \times \MZ/24\MZ$ & \\
\hline
 \end{tabular}
\end{center}
\caption{The integral homology of $\GL_2(\MZ_K)$ for $K=\MQ(\sqrt{2}),\MQ(\sqrt{3})$}
\end{table}}

Again we can also consider the case of a number field whose ring of integers is not a principal ideal domain. For $K=\MQ(\sqrt{10})$ there are two isomorphism classes of $\MZ_K$-modules of dimension $2$ represented by $L_0=\MZ_K^2$ and $L_1=\MZ_K \oplus \fp$ where $\fp=\langle 2,\sqrt{10}\rangle$. As was the case for $\MQ(\sqrt{-6})$, the integral homology is sufficient to distinguish between the two groups $\GL(L_0)$ and $\GL(L_1)$.

\def \arraystretch{1.3}
{\large
\begin{table}[h]
\begin{center}
 \begin{tabular}{|l|l|l|}
  \hline
  $n$  & $\H_n \left( \GL(L_0),\MZ\right)$ & $\H_n \left( \GL(L_1),\MZ\right)$\\
\hline
\hline
$1$  &  $(\MZ/2\MZ)^2 \times \MZ$ &$(\MZ/2\MZ)^2 \times \MZ$\\
\hline 
$2$  & $(\MZ/2\MZ)^8 \times \MZ$ & $(\MZ/2\MZ)^7 \times \MZ$\\
\hline
$3$  & $(\MZ/2\MZ)^{12} \times \MZ/12\MZ \times \MZ/24\MZ \times  \MZ$  & $(\MZ/2\MZ)^{11} \times \MZ/6\MZ \times \MZ/24\MZ \times \MZ$\\
\hline
$4$  & $(\MZ/2\MZ)^{19} \times (\MZ/12\MZ)^2$ &$(\MZ/2\MZ)^{18} \times \MZ/6\MZ \times\MZ/12\MZ$ \\
\hline 
$5$  &$(\MZ/2\MZ)^{24} \times (\MZ/4\MZ)^2$ & $(\MZ/2\MZ)^{24} \times \MZ/4\MZ$\\
\hline
$6$  &$(\MZ/2\MZ)^{32} \times \MZ/4\MZ$ & $(\MZ/2\MZ)^{32}$\\
\hline
$7$  & $(\MZ/2\MZ)^{36} \times \MZ/12\MZ \times \MZ/24\MZ $  &$(\MZ/2\MZ)^{35} \times \MZ/6\MZ \times \MZ/24\MZ $\\
\hline 
$8$  & $(\MZ/2\MZ)^{43}  \times (\MZ/12\MZ)^2$ &$(\MZ/2\MZ)^{42} \times \MZ/6\MZ \times \MZ/12\MZ$ \\
\hline

 \end{tabular}
\end{center}
\caption{The integral homology of $\GL_2(\MZ_K)$ and $\GL(\MZ_K\oplus \fp)$ for $K=\MQ(\sqrt{10})$}
\end{table}}

\subsection{A quaternion order}
While we mainly used the described algorithms to work out the homology of Bianchi groups, in principle they work in a very general setup, e.g. for unit groups of maximal orders in quaternion algebras.
 Let $A=\left( \frac{2,3}{\MQ}\right)$ be the rational quaternion algebra ramified at $2$ and $3$ and 
$\mathfrak{M}_{2,3}$ be a maximal order in $A$. Then $\mathfrak{M}_{2,3}^\times$ has periodic integral homology 
(since $\langle -1 \rangle$ is the only torsion) and we obtain for $n \geq 1$:
\begin{equation}
   \mathrm{H}_n(\mathfrak{M}_{2,3}^\times,\mathbb{Z}) = \begin{cases} \MZ/24\MZ & n\equiv 1 \pmod{2} \\ 
\MZ/2\MZ 
& n \equiv 0 \pmod{2}\end{cases}
  \end{equation}

In \cite{Braunetal} one can find an algorithm to compute a presentation of the unit groups we consider. One of the explicit example given is 
\begin{equation}
 \mathfrak{M}_{2,3}^\times/\{\pm 1\} \cong \langle a,b,t~|~a^3,b^2,atbt \rangle.
\end{equation}
 This information can also be used to study the integral homology (cf. \cite{BaumslagIntegralHomology}) and an implementation is available in the software package MAGNUS (cf. \cite{Magnus}). We compared the results obtained from the presentation and from our implementation and are happy to report that both yield the same integral homology, namely:
\begin{equation}
   \mathrm{H}_n(\mathfrak{M}_{2,3}^\times/\{\pm 1\},\mathbb{Z}) = \begin{cases} \MZ/12\MZ & n =1 \\ \MZ/6\MZ & n\equiv 1 \pmod{2},~n>1 \\ 
\{0\} 
& n \equiv 0 \pmod{2}\end{cases}
  \end{equation}

The presentations for the other groups considered in this section appear to be to complicated for MAGNUS to compute the integral homology from them.

\subsection{Additional examples}
For imaginary quadratic number fields $K=\MQ(\sqrt{-d})$ with $1 \leq d \leq 26$ we computed resolutions for the groups $\GL(L)$ and $\SL(L)$ where $L$ runs through a system of representatives of isomorphism classes of lattices in $K^2$ (whence $\End_{\MZ_K}(L)$ runs through a system of representatives of conjugacy classes of maximal orders). These complexes are freely available in the GAP-package HAP (\cite{GAP4,HAP}) and on the author's homepage.

For imaginary quadratic number fields $K=\MQ(\sqrt{-d})$ with $d \in \{1,2,3,7,11,15\}$ we computed contractible complexes for the groups $\GL_3(\MZ_K)$. These complexes are available from the author's homepage in HAP-readable format.

For real quadratic number fields $K=\MQ(\sqrt{d})$ with $2 \leq d \leq 15$ squarefree we computed contractible complexes for the groups $\GL(L)$ where $L$ runs through a system of representatives of the $\MZ_K$-isomorphism classes of modules of dimension $2$. These complexes are also available from the author's homepage in HAP-readable format.

\subsection{Sizes of the cell complexes}\label{Sizes}

As previously noted the complexity of our cell complex appears to increase with growing discriminant. To illustrate this and to give a picture of the scaling of the algorithms we give some information in the following table. We consider the groups of type $\GL_2(\MZ_K)$ and $\GL_3(\MZ_K)$ with $K=\MQ(\sqrt{-D})$ where $-D$ is the discriminant of $\MZ_K$. These groups have virtual cohomological dimension $2$ and $6$, respectively, and we present the number of pairwise inequivalent cells in all relevant dimensions, the maximal number of cells appearing in the boundary of a cell (which is always realized at a cell of maximal dimension), some information on the largest appearing stabilizer, and the time it took to construct the full complex (i.e. without applying Wall's lemma). The computations were performed on an Intel core i7 processor running at $2.93$ GHz.

\def \arraystretch{1.3}
{\large
\begin{table}[h]
\begin{center}
 \begin{tabular}{|l|l|l|l|l|}
  \hline
  $D$  &nbr. of cells &max. bound. & max. stab. & runtime\\
\hline
\hline
\multicolumn{5}{|c|}{$n=2$}  \\
\hline
$3$ &$[ 1, 1, 1 ]$ & $6$ & Order $72$ solvable  &$<1$ sec.\\
\hline 
$4$ & $[ 1, 1, 1 ]$ & $4$ &Order $96$ solvable  & $<1$ sec.\\
\hline
$7$ & $[ 1, 2, 1 ]$ & $6$ & Order $12$ solvable & $<1$ sec.\\
\hline
$8$ & $[ 1, 2, 1 ]$ & $4$ & Order $48$ solvable & $<1$ sec. \\
\hline
$11$ & $[ 1, 2, 1 ]$ & $6$ & Order $24$ solvable & $<1$ sec.\\ 
\hline
$15$ & $[ 2, 4, 4 ]$ & $6$ & Order $12$ solvable  & $<1$ sec.\\ 
\hline
\hline
\multicolumn{5}{|c|}{$n=3$}  \\
\hline
$3$ &$[ 2, 2, 3, 4, 3, 2, 1]$ & $54$ & Order $1296$ solvable &$\sim 4.5$ min.\\
\hline 
$4$ & $[ 1, 1, 3, 5, 4, 3, 2]$ & $76$ & Order $348$ solvable & $\sim 1$ min.\\
\hline
$7$ & $[ 2, 2, 8, 11, 9, 6, 3]$ & $70$ & $C_2 \times \mathrm{PSL}_3(2)$ & $\sim 1$ min.\\
\hline
$8$ & $[ 2, 7, 28, 37, 26, 16, 5]$ & $220$ & Order $96$ solvable & $\sim 6$ min. \\
\hline
$11$ & $[ 12, 51, 125, 150, 91, 34, 8]$ & $478$ & Order $48$ solvable & $\sim 59$ min.\\ 
\hline
$15$ & $[ 11, 86, 299, 454, 338, 137, 28 ]$ & $474$ & Order $48$ solvable & $\sim 16$ h\\ 
\hline
 \end{tabular}
\end{center}
\caption{The well-rounded complexes of $\GL_2$ and $\GL_3$ over imaginary quadratic integers}
\end{table}}

\begin{acknowledgements}
The author is supported by the DFG research training group \emph{Experimental and constructive algebra} (GRK 1632). The author is very thankful to the mathematics department at NUI Galway, and Alexander Rahm in 
particular, for their hospitality
during his stay in Galway in fall 2013. The results presented here are part of the author's Master's thesis which was supervised by Gabriele Nebe.
\end{acknowledgements}

%

\bibliographystyle{spbasic}      
\bibliography{Resolutions_for_unit_groups_of_orders}   

\end{document}